\theoremstyle{plain}
\newtheorem{lem}{Lemma}[section]
\newtheorem{prop}[lem]{Proposition}
\newtheorem{thm}[lem]{Theorem}
\newtheorem*{ThmA}{Theorem A}
\newtheorem*{ThmB}{Theorem B}
\newtheorem*{CorC}{Corollary C}
\newtheorem*{ThmD}{Theorem D}
\newtheorem*{CorE}{Corollary E}
\newtheorem*{Koebe-dist}{Koebe's Distortion Theorem}
\newtheorem*{Koebe-1/4}{Koebe's One-quarter Theorem}
\theoremstyle{definition}
\newtheorem{defn}[lem]{Definition}
\newtheorem*{defn*}{Definition}
\newtheorem*{ex*}{Example}
\newtheorem{rem}[lem]{Remark}
\newtheorem*{rem*}{Remark}
\newtheorem*{ack}{Acknowledgement}
\theoremstyle{remark}
\DeclareMathOperator{\diam}{diam}
\DeclareMathOperator{\dist}{dist}
\DeclareMathOperator{\Arg}{Arg}
\newcommand{\C}{\mathbb C}
\newcommand{\D}{\mathbb D}
\newcommand{\clC}{\widehat{\C}}
\newcommand{\chat}{\widehat{\C}}
\newcommand{\R}{\mathbb R}
\newcommand{\Z}{\mathbb Z}
\newcommand{\B}{\mathcal B}
\newcommand{\bd}{\partial}
\renewcommand{\Re}{\textup{Re}}
\renewcommand{\Im}{\textup{Im}}
\newcommand{\cals}{\mathcal{S}}
\newcommand{\calb}{\mathcal{B}}
\newcommand{\re}{\textup{Re}}
\begin{document}

\title{Fatou components and singularities of meromorphic functions}
\date{\today}

\author{Krzysztof Bara\'nski}
\address{Institute of Mathematics, University of Warsaw,
ul.~Banacha~2, 02-097 Warszawa, Poland}
\email{baranski@mimuw.edu.pl}

\author{N\'uria Fagella}
\address{Departament de Matem\`atiques i Inform\`atica, Institut de Matem\`atiques de la 
Universitat de Barcelona (IMUB) and Barcelona Graduate School of Mathematics (BGSMath). 
 Gran Via 585, 08007 Barcelona, Catalonia, Spain}
\email{nfagella@ub.edu}

\author{Xavier Jarque}
\address{Departament de Matem\`atiques i Inform\`atica, Institut de Matem\`atiques de la 
Universitat de Barcelona (IMUB), and Barcelona Graduate School of Mathematics (BGSMath).
 Gran Via 585, 08007 Barcelona, Catalonia, Spain}
\email{xavier.jarque@ub.edu}

\author{Bogus{\l}awa Karpi\'nska}
\address{Faculty of Mathematics and Information Science, Warsaw
University of Technology, ul.~Ko\-szy\-ko\-wa~75, 00-662 Warszawa, Poland}
\email{bkarpin@mini.pw.edu.pl}

\thanks{The first and fourth authors were partially supported by the Polish NCN grant decision DEC-2014/13/B/ST1/04551. The 
second and third authors were partially supported by the Spanish 
grant MTM2014-52209-C2-2-P. The fourth author was partially supported by PW grant 504/02465/1120.}
\subjclass[2010]{Primary 30D05, 37F10, 30D30.}

\bibliographystyle{amsalpha}

\begin{abstract} We prove several results concerning the relative position of points in the postsingular set $P(f)$ of a meromorphic map $f$ and the boundary of a Baker domain or the successive iterates of a wandering component. For Baker domains we answer a question of Mihaljevi\'c-Brandt and Rempe-Gillen. For wandering domains we show that if the iterates $U_n$ of such a domain have uniformly bounded diameter, then there exists a sequence of postsingular values $p_n$ such that $\dist(p_n,\partial U_n)\to 0$ as $n\to \infty$. We also prove that if $U_n \cap P(f)=\emptyset$ and  the postsingular set of $f$ lies at a positive distance from the Julia set (in $\C$) then any sequence of iterates of wandering domains must contain arbitrarily large disks. This allows to exclude the existence of wandering domains for some meromorphic maps with infinitely many poles and unbounded set of singular values.
\end{abstract}

\maketitle

\section{Introduction and statement of the results} \label{sec:intro}

We consider dynamical systems defined by the iteration of a meromorphic function 
\[
f: \C \to \clC,
\]
on the complex plane, especially those with an essentially singularity at infinity (transcendental).  Motivating examples of such maps are given by root-finding algorithms like, for instance, Newton's method applied to any entire transcendental map.  

In this setting the Riemann sphere $\chat$ splits into two invariant sets: the {\em Fatou set} $F(f)$, which consists of points for which the family of iterates $(f^n)$ is well defined for all $n>0$ and normal in some neighborhood; and its complement  $J(f)$ known as the {\em Julia set}. 

The connected components of the Fatou set or {\em Fatou components} which are periodic can be completely classified based on the possible limit functions of the family of iterates. Indeed, if $U$ is a periodic Fatou component, then either $U$ is a component of a basin of attraction of an attracting or parabolic cycle; or $U$ is a topological disk ({\em Siegel disk}) or annulus ({\em Herman ring}) on which the map is conjugate to an irrational rigid rotation; or the iterates of $f$ (or multiples of them) tend uniformly on compact subsets to the essential singularity at infinity, in which case $U$ is a component of a cycle of {\em Baker domains}. If a Fatou component is neither periodic nor preperiodic, then it is called a {\em wandering domain}. Neither Baker domains nor wandering components are present in the dynamical plane when $f$ is a rational map \cite{sullivan}. For classical background on the dynamics of meromorphic functions we refer to the survey \cite{bergweiler} or the articles \cite{bkl1,bkl2,bkl3,bkl4}.

The set $S(f)$ of finite {\em singular values} of $f$ plays an important role in determining the dynamics of the map. Recall that singular values are either {\em critical} or {\em asymptotic values}.  Among well known classes of transcendental maps are those with a finite set of singular values (the Speisser class $\cals$) or with a bounded set of singular values (the Eremenko--Lyubich class $\calb$). The orbits of the singular values and their accumulation points form the {\em postsingular set}
\[
P(f) = \C \cap \bigcup_{s\in \overline{S(f)}} \bigcup_{n=0}^\infty f^n(s), 
\]
where we neglect the terms which are not defined or are infinite.
The importance of singular values lies on the fact that periodic Fatou components are in some way associated to the postsingular set. Indeed, any basin of attraction of an attracting or parabolic cycle must contain a singular value, while the boundary components of Siegel disks and Herman rings are contained in the closure $\overline{P(f)}$ of the postsingular set. 

The relation between singular values and Fatou components that are specific for transcendental maps, namely Baker and wandering domains, is less clear. This problem is nowadays one of the challenges in transcendental dynamics, and the object of this paper. It is worth noticing that infinitely many singular values are necessary for these types of components to exist at all \cite{bak84,gold-keen,erem-lyub,bkl4}, and that entire maps in class $\calb$ do not have Baker domains, nor wandering domains tending to infinity under iteration \cite{bkl3,erem-lyub,rs99}. Hence, results in this direction must go beyond these classes of functions. It is known \cite{bhkm,Zheng2003} that for a wandering domain $U$, all finite limit functions of $\{f^n|_U\}$ lie in the derived set of $P(f)$. Moreover, if for a Baker or wandering domain $U$ we have $f^{np}|_U \to \infty$ as $n \to \infty$ for some $p \ge 1$, then $\infty$ is in the derived set of $\bigcup_{k=0}^{p-1} f^k(S(f))$ \cite{bak02,Zheng2003}. These results can be used to rule out the existence of wandering domains in certain situations. The existence of such domains was also excluded for some entire maps studied in \cite{stallard-no-wandering} and some meromorphic maps with finitely many poles, which are Newton methods of entire functions \cite{bergweiler-no-wandering}.

There are known examples of Baker domains on which $f$ is univalent (and hence contains no critical values), has finite degree larger than $1$ or infinite degree. We refer the reader to \cite{barfag,faghen,rippon,rippon-survey,absorb} for classification and examples of such components. Bergweiler in \cite{bergweiler-inv_dom} gave an example of a Baker domain of an entire map lying at a positive (Euclidean) distance from the postcritical set and therefore showing that a very mild relation between these objects is possible.

Examples of wandering domains, with the first one given by Baker in \cite{bak76}, are not so numerous. Most of them are constructed either using approximating theory \cite{elpathol}, by the lifting method \cite{faghen09} (that is by lifting a function with no zeros by the logarithm and therefore converting periodic components into wandering domains), or by quasiconformal surgery \cite{shiwand}. In the first case the method does not allow much control on the postsingular set of the map, while in the second case the relation of the postsingular set to the wandering domain is completely determined by the type of periodic Fatou component of the original map. As an example, take the function considered in \cite{faghen09}, defined as
\[
f(z)=2-\lambda - \log(2-\lambda) + 2 z -e^z
\]
with $\lambda=e^{2\pi i (1-\sqrt{5})/2}$, which is the lift of $g(w)= \frac{e^{2-\lambda}}{2-\lambda} we^{-w}$ (see Figure~\ref{siegel}). The map $f$ has infinitely many orbits of simply connected wandering domains on which $f$ is univalent, while the postsingular set is dense in the boundary of each of these wandering domains. To our knowledge, this is the only explicit example of a simply connected wandering domain on which $f$ is univalent. All other (lifting) examples contain critical points inside wandering components lifted from basins of attraction of attracting or parabolic cycles. We mention also the inspiring examples by Kisaka and Shishikura in \cite{shiwand}, where they construct wandering domains of eventual connectivity two for maps whose all singular values lie in preperiodic components (see also \cite{brs}).

\begin{figure}[hbt!]
\fboxsep=0.5pt
\begin{center}
\framebox{\includegraphics[width=0.45\textwidth]{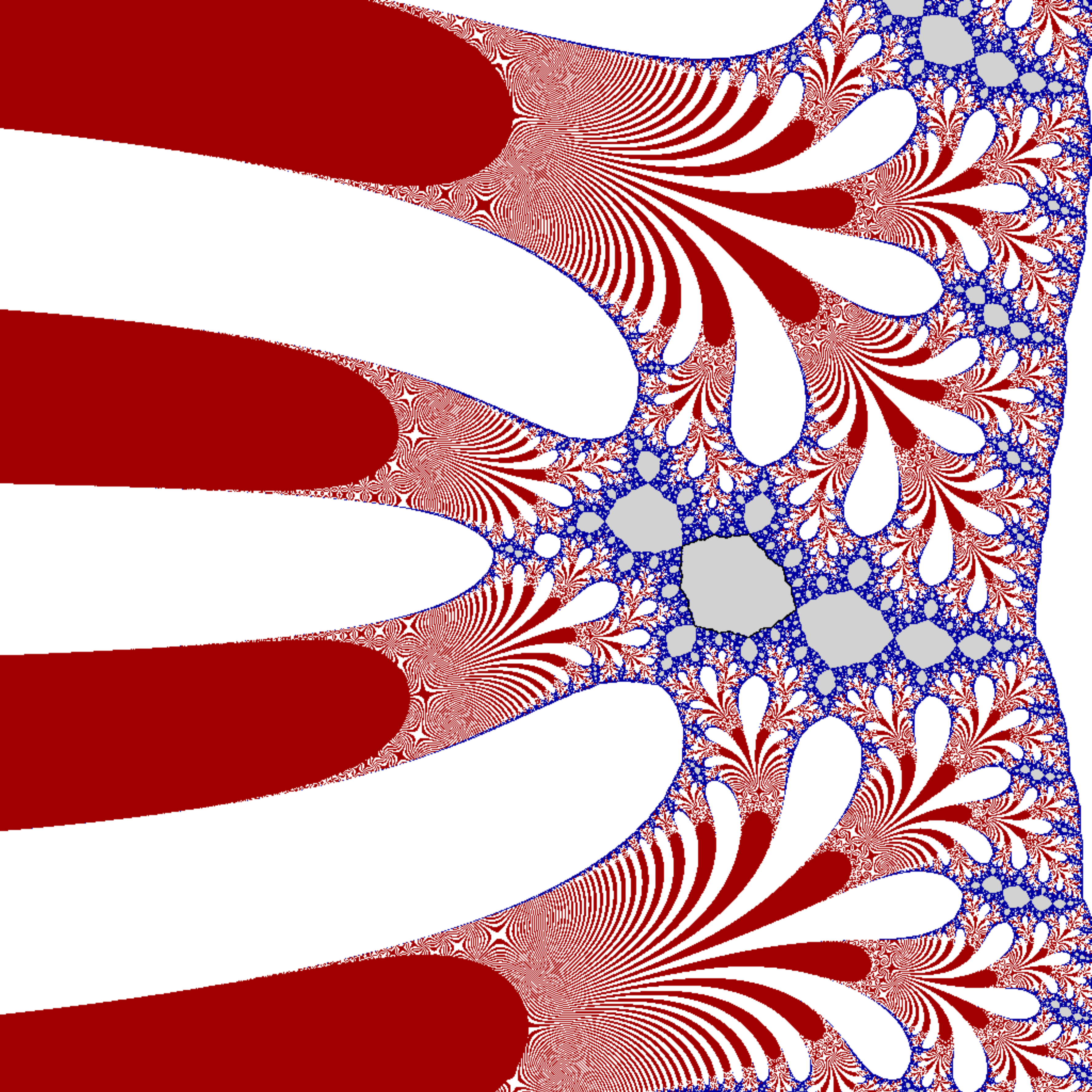}}
\framebox{\includegraphics[width=0.45\textwidth]{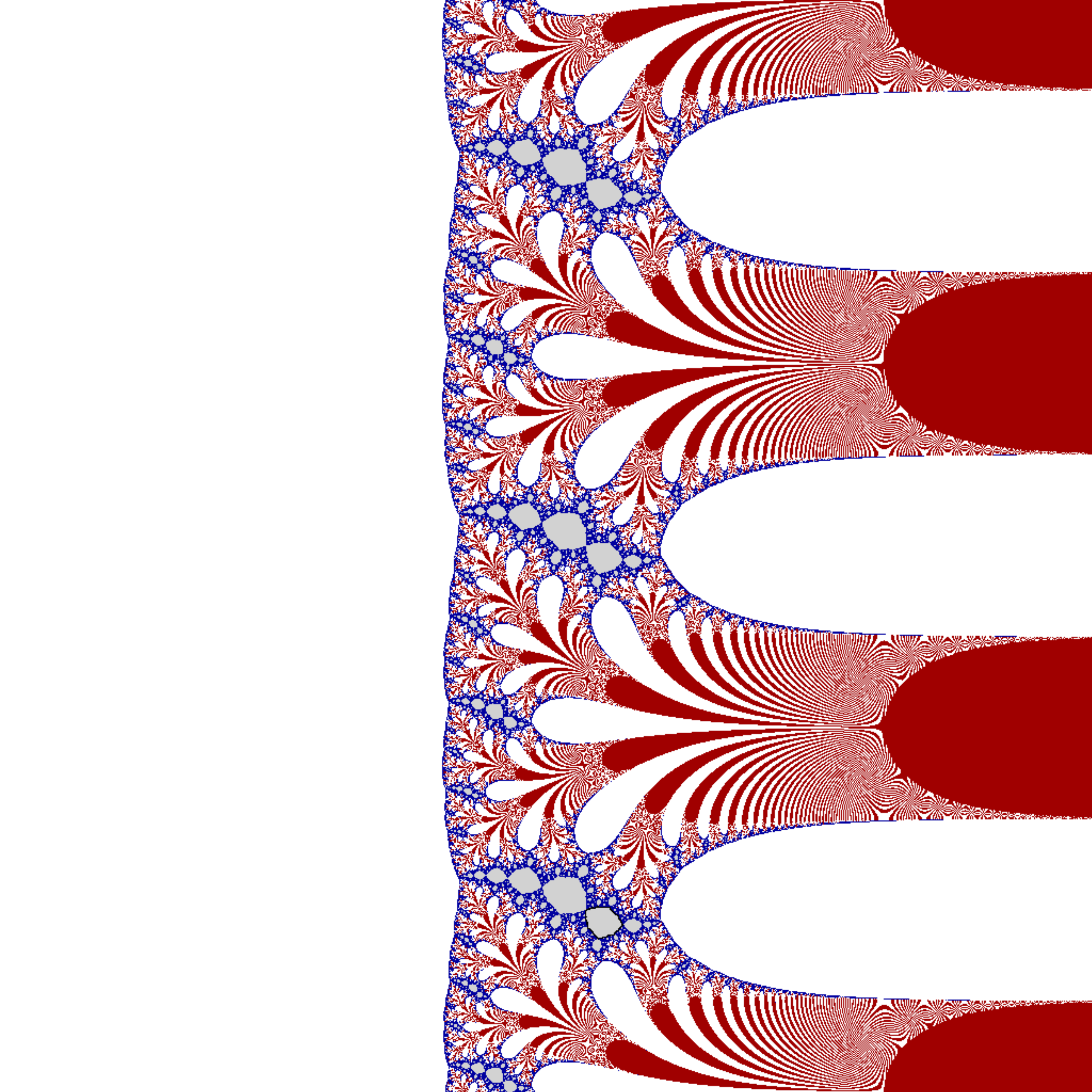}}
\end{center}
\caption{\label{siegel} \small Left: Dynamical plane of $g(w)= \frac{e^{2-\lambda}}{2-\lambda} we^{-w}$ with  $\lambda=e^{2\pi i (1-\sqrt{5})/2)}$. There is a superattracting basin centered at $w=0$ and a Siegel disk centered at $w_0=2-\lambda$. The only singular values are at the origin and at $c=2$ (on the boundary of the Siegel disk). Right: Dynamical plane of $f(z)$ satisfying $\exp(f(z))=g(\exp(z))$. The superattracting basin lifts to a Baker domain (white), while the Siegel disk lifts to infinitely many orbits of wandering domains on which $f$ is univalent (grey). Figures are extracted from \cite{faghen}.} 
\end{figure}

The above examples show different possibilities for the relationship between Baker  or wandering domains and the set $P(f)$. Our goal in this paper is to present some new results in this direction.

Our first result concerns Baker domains. In \cite{bergweiler-inv_dom}, Bergweiler showed the following result for invariant Baker domains of entire functions.

\begin{thm}[{\cite[Theorem 3]{bergweiler-inv_dom}}]\label{thm:berg}
Let $f$ be a transcendental entire function with an invariant Baker domain $U$. If $U\cap S(f) = \emptyset$, then there exists a sequence $p_n\in P(f)$ such that, as $n\to \infty$, 
\begin{enumerate}
\item[\rm (1)] $|p_n|\to\infty$, 
\item[\rm (2)] $|p_{n+1}/p_n| \to 1$,
\item[\rm (3)] $\dist(p_n,U)=o(|p_n|)$.
\end{enumerate}
\end{thm}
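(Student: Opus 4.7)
The strategy is to track an orbit $z_n := f^n(z_0)$ of a base point $z_0 \in U$ inside the Baker domain and show that $P(f)$ must accumulate along $(z_n)$ at the scale of $|z_n|$.

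\textbf{Setup (hyperbolic isometry and Koebe).} Because $U$ is simply connected (invariant Baker domains of transcendental entire functions are simply connected) and $U \cap S(f) = \emptyset$, the restriction $f|_U \colon U \to U$ is an unbranched holomorphic covering, hence a local isometry for the hyperbolic metric $\rho_U$. Therefore $\rho_U(z_n, z_{n+1}) = \rho_U(z_0, z_1) =: M$ is constant. Combined with the standard bound $\lambda_U(z) \asymp 1/\dist(z, \partial U)$ on the hyperbolic density in simply connected $U$ (via Koebe's 1/4 theorem), the hyperbolic ball of radius $M$ around $z_n$ sits inside the Euclidean ball $B(z_n, C \dist(z_n, \partial U))$ with $C = C(M)$, so
\[
|z_{n+1} - z_n| \le C\, \dist(z_n, \partial U).
\]
Hence, once we show $\dist(z_n, \partial U) = o(|z_n|)$, any sequence $p_n$ with $|p_n - z_n| = o(|z_n|)$ will satisfy (1) and (2).

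\textbf{Pull-back to produce postsingular points near $(z_n)$.} Fix $\epsilon > 0$ and suppose, for contradiction, that $B(z_n, \epsilon|z_n|) \cap P(f) = \emptyset$ along some subsequence. Each such disc is simply connected and disjoint from $P(f)$, so the inverse branch $\psi_n = (f^n)^{-1}$ sending $z_n \mapsto z_0$ is defined and univalent on it. The telescoping identity $|(f^n)'(z_0)| = \lambda_U(z_0)/\lambda_U(z_n)$ gives $|\psi_n'(z_n)| \asymp \dist(z_0, \partial U)/\dist(z_n, \partial U)$, and Koebe's 1/4 theorem produces
\[
\psi_n\!\bigl(B(z_n, \epsilon|z_n|/2)\bigr) \;\supset\; B\!\Bigl(z_0,\; c\,\frac{|z_n|}{\dist(z_n, \partial U)}\Bigr),
\]
with the right-hand ball contained in $\C \setminus P(f)$. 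Whenever $\dist(z_n, \partial U) = o(|z_n|)$ its radius tends to $\infty$, forcing $P(f) = \emptyset$, which is impossible for a transcendental entire $f$. Thus, on any subsequence where $\dist(z_n, \partial U) = o(|z_n|)$, for every $\epsilon>0$ the ball $B(z_n, \epsilon|z_n|)$ eventually contains a point of $P(f)$. Choosing $p_n$ diagonally from these balls with $\epsilon_n \downarrow 0$ slowly gives a postsingular sequence with $|p_n| \asymp |z_n| \to \infty$ and $\dist(p_n, U) \le |p_n - z_n| = o(|p_n|)$, while $|p_{n+1} - p_n| \le |p_{n+1} - z_{n+1}| + |z_{n+1} - z_n| + |z_n - p_n| = o(|p_n|)$ yields (2).

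\textbf{Main obstacle: ruling out ``fat'' orbits.} Everything above rests on establishing $\dist(z_n, \partial U) = o(|z_n|)$ for all large $n$ (or at least on a cofinal subsequence large enough to build $p_n$). This is not automatic for Baker domains: half-plane examples exhibit $\dist(z_n, \partial U) \asymp |z_n|$, and it is precisely here that the hypothesis $U \cap S(f) = \emptyset$ must be used a second time. The standard route is a second pull-back of growth type. If one had $B(z_{n_k}, c|z_{n_k}|) \subset U$ along a subsequence, then univalent inverse branches of $f^{n_k}$ are defined on these huge, $S(f)$-free discs; Koebe distortion converts them into an accumulating family of univalent maps whose behaviour is incompatible with $f$ being transcendental entire with an essential singularity at $\infty$ (equivalently, one invokes Ahlfors' five-island theorem or a Zalcman rescaling to extract a forbidden omitted value). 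Making this ``no-fat-orbit'' step precise is the technically delicate heart of the argument; once it supplies $\dist(z_n, \partial U) = o(|z_n|)$, the Setup and the pull-back of the middle paragraph close the proof.
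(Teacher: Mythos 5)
Your proposal is not the paper's route, and it contains a gap that you yourself flag and that I do not think can be filled as stated.

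First, a remark on context: the paper does not prove this theorem directly; it quotes it from Bergweiler and then proves the generalization Theorem~A (for meromorphic maps) via Lemma~\ref{lem:1}. That pair is the proper object of comparison, and Theorem~1.1 follows from Theorem~A because an invariant Baker domain of an entire map is simply connected, so $\C\setminus U$ automatically has an unbounded component.

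The genuine gap is the ``no-fat-orbit'' step, i.e.\ the claim that $\dist(z_n,\partial U)=o(|z_n|)$ along the chosen orbit. Your pull-back argument only produces a contradiction \emph{conditionally} on this estimate (the Koebe image has radius $\asymp |z_n|/\dist(z_n,\partial U)$, which blows up precisely when the ratio does), so the whole construction hangs on it. You suggest that $U\cap S(f)=\emptyset$ rules out fat orbits via a ``second pull-back'' or Ahlfors/Zalcman, but no such mechanism exists: having arbitrarily large $P(f)$-free discs inside a Fatou component is perfectly compatible with $f$ being transcendental entire, and Baker domains containing half-planes or sectors --- for which $\dist(z_n,\partial U)\asymp|z_n|$ --- can satisfy $U\cap S(f)=\emptyset$. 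Indeed, the very structure of the paper's proof of Theorem~A confirms that the authors do not assume this estimate: when $\dist(\zeta_j(k),\bd U)>r_j(k)/k$ (the ``fat'' case) they replace the radial segment by a circular arc of radius $r_j(k)\approx|\zeta_j(k)|$ running along $\bd\D(0,r_j(k))$ until it meets $\bd U$, and apply Lemma~\ref{lem:1} to that arc. The resulting postsingular point lies near the circle of radius $|z_n|$, hence is $o(|p|)$-close to $U$ even though $z_n$ itself is far from $\bd U$. This handles the fat case without ever controlling $\dist(z_n,\partial U)$, and is exactly what your disc-based argument lacks. Without this idea --- passing from discs centred at orbit points to \emph{curves} (radial or circular) reaching $\bd U$, together with a distortion-along-curves lemma such as Lemma~\ref{lem:1} --- the proposal cannot be completed.

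A secondary consequence of the same gap: your verification of conclusion~(2) rests on $|z_{n+1}-z_n|\le C\dist(z_n,\partial U)=o(|z_n|)$, which again needs the unproved estimate. The paper instead obtains~(2) by interpolating a whole array of radii $r_j(k)$ with $r_{j+1}(k)/r_j(k)<1+1/k$ between successive orbit moduli and placing a postsingular point near each circle; the ratio control is built into the construction rather than inherited from the orbit.
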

Here and in the sequel $\dist$ denotes the Euclidean distance. Additionally, as we mentioned above, Bergweiler provided an example (which inspired the example in Figure~\ref{siegel}), where the postcritical set is located at a positive distance from the Baker domain, showing in some sense the sharpness of the theorem. We remark that Theorem~\ref{thm:berg} holds for entire functions and therefore all considered Baker domains are simply connected. A version of Theorem~\ref{thm:berg} for meromorphic transcendental maps (and hence for Baker domains which are not necessarily simply connected) was proven by Mihaljevi\'c-Brandt and Rempe-Gillen in \cite[Theorem~1.5]{lasse-helena}, with conclusion~(2) replaced by
\[
\sup_{n\geq 1} \left|\frac{p_{n+1}}{p_n}\right| < \infty.
\]
They also asked a question, whether the stronger conclusion~(3) holds in this setting. In this paper we answer this question positively under a mild additional hypothesis, proving the following.

\begin{ThmA}
Let $U$ be a Baker domain of period $p \ge 1$ for a meromorphic map $f$, such that $f^{pn}|_U \to \infty$ as $n \to \infty$ and $\C \setminus U$ has an unbounded connected component. Then there exists a sequence of points $p_n \in P(f)$ such that 
\[
|p_n| \to \infty, \qquad \left|\frac{p_{n+1}}{p_n}\right| \to 1, \qquad \frac{\dist(p_n, U)}{|p_n|} \to 0
\]
as $n \to \infty$.
\end{ThmA}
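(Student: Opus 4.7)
The plan is to argue by contradiction, following Bergweiler's strategy for Theorem~\ref{thm:berg} in the entire case \cite{bergweiler-inv_dom} but using the assumption on the unbounded complementary component to recover the geometric picture he exploited via the simple connectivity of $U$. First I would replace $f$ by $f^p$, so that $U$ becomes invariant and $f^n|_U \to \infty$; this is harmless because $P(f^p) \subseteq P(f)$. Fix $z_0 \in U$ and set $z_n = f^n(z_0) \to \infty$. The theorem of Mihaljevi\'c-Brandt and Rempe-Gillen \cite{lasse-helena} already supplies a sequence $(q_n) \subset P(f)$ with $|q_n| \to \infty$ and $\sup_n |q_{n+1}/q_n| < \infty$; the task is to upgrade this to the asymptotic ratio $|p_{n+1}/p_n| \to 1$ together with the relative closeness $\dist(p_n, U) = o(|p_n|)$.

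Assume toward contradiction that no sequence in $P(f)$ realises all three conclusions simultaneously. The key step is to extract from this negation, for some $\varepsilon > 0$, an infinite set of indices $n$ along which the annulus
\[
A_n = \{z \in \C : (1-\varepsilon)|z_n| < |z| < (1+\varepsilon)|z_n|\}
\]
is ``$P(f)$-free near $U$'' in a precise quantitative sense: every point of $P(f) \cap A_n$ is at Euclidean distance at least $\varepsilon|z_n|$ from $\overline{U}$. Let $K$ denote the unbounded connected component of $\C \setminus U$. The continuum $K \cup \{\infty\} \subset \chat \setminus U$ joins $\partial U$ to $\infty$, and together with the $P(f)$-free portion of $A_n$ it bounds, after rescaling by $|z_n|$, a simply connected domain $\Omega_n \subset \chat$ of uniformly bounded spherical hyperbolic geometry, containing the rescaled orbit points $z_n/|z_n|$ and $z_{n+1}/|z_n|$ and disjoint from the rescaled postsingular set.

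On $\Omega_n$, univalent inverse branches of the appropriate iterate of $f$ are defined, and Koebe's Distortion Theorem yields uniform control on their derivatives. Combined with the Schwarz--Pick contraction of the hyperbolic metric of $U$ by $f$, this translates into uniform Euclidean distortion of $f$ at scale $|z_n|$ around $z_n$, which forces the dynamics near the orbit to admit an accompanying postsingular sequence satisfying the three required properties -- contradicting the assumption. The main obstacle is the geometric construction in the second step: in Bergweiler's entire-map setting, the simple connectivity of $U$ and the fact that $\chat \setminus U$ is a full continuum through $\infty$ provide the distortion domain automatically, whereas in the meromorphic case the possibly infinite connectivity of $U$ and the presence of poles force us to build $\Omega_n$ explicitly from the unbounded complementary component $K$. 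Carrying out this construction with the required uniform rescaled hyperbolic bounds, along a subsequence, is the technical heart of the proof; once achieved, Koebe's theorem closes the argument along Bergweiler's lines.
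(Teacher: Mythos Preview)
Your proposal follows Bergweiler's global contradiction strategy, but the paper's proof is organised quite differently, and your sketch has a real gap at the decisive step.

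\textbf{How the paper argues.} The paper isolates all the inverse-branch/Koebe machinery into a separate technical statement, Lemma~\ref{lem:1}: for any compact $K\subset U$ and any $\varepsilon>0$, $M\ge 1$, there is $n_0$ such that for $n\ge n_0$ every curve $\gamma$ of length at most $M\dist(f^n(z),\partial U_n)$ joining $f^n(z)$ to $\partial U_n$ meets $\D(\gamma,\varepsilon\ell(\gamma))\cap P(f)$. With this lemma in hand the proof of Theorem~A is \emph{constructive}: one takes $K$ to be an arc from $z$ to $f^p(z)$ in $U$, sets $\varepsilon=1/k$, $M=2\pi k$, and for each $k$ subdivides the interval $[|f^{pN(k)}(z)|,|f^{pN(k+1)}(z)|]$ into radii $r_j(k)$ with ratio $<1+1/k$. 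The hypothesis on the unbounded component of $\C\setminus U$ is used in a very concrete way: it guarantees that $\partial U$ meets every circle $|w|=r$ for large $r$, so that for each $r_j(k)$ one can take $\gamma$ to be either a short radial segment or a circular arc of length $\le 2\pi r_j(k)$ joining the orbit to $\partial U$, and Lemma~\ref{lem:1} produces the postsingular point $u_j(k)$ nearby. Renumbering the $u_j(k)$ gives the sequence $(p_n)$ directly.

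\textbf{Where your sketch breaks.} Two points are problematic. First, the passage from the negation of the conclusion to ``annuli $A_n$ are $P(f)$-free near $U$'' is not automatic; in Bergweiler's paper this is an independent lemma whose proof you have not indicated, and in the meromorphic setting the reduction is more delicate because you must simultaneously control all three conditions. Second, and more seriously, your description of the contradiction is wrong: you write that Koebe distortion on $\Omega_n$ ``forces the dynamics near the orbit to admit an accompanying postsingular sequence satisfying the three required properties''. That is circular --- you have assumed no such sequence exists. The actual contradiction, both in Bergweiler's argument and in the proof of Lemma~\ref{lem:1} here, comes from the \emph{Julia set}: bounded-distortion inverse branches defined on a fixed neighbourhood of a Julia point would make $f^{n_j}$ univalent there for infinitely many $n_j$, contradicting $|(f^{n_j})'|\to\infty$ (or the density of repelling periodic points). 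Your $\Omega_n$ would need to contain a definite piece of $\partial U$ for this to bite, and you have not said how the unbounded component $K$ and the annulus $A_n$ combine to give a simply connected $\Omega_n$ with that property and with uniformly bounded hyperbolic geometry. Without these two ingredients the argument does not close.
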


We remark that the assumption on an unbounded component of $\C\setminus U$ (which is equivalent to the condition that $\{\infty\}$ is not a singleton component of $\clC \setminus U$) is trivially satisfied if the Baker domain is simply connected. This is always the case if $f$ is  Newton's method of an entire map, as it was shown by the authors in \cite{bfjk}.

Theorem~A is a consequence of  a technical lemma (Lemma \ref{lem:1})  which generalizes  \cite[Lemma 3]{bergweiler-inv_dom} and, additionally, has several applications concerning the relation between points in $P(f)$ and wandering domains. The following consequence of Lemma \ref{lem:1}, although stated for a general Fatou component, is most meaningful when $U$ is a wandering or Baker domain. In particular, it contributes to an answer to Question~11 from \cite{bergweiler}.

\begin{ThmB} \label{cor:interval}
Let $f$ be a transcendental meromorphic map and $U$ be a Fatou component of $f$. Denote by  $U_n$ the Fatou component such that $f^n(U) \subset U_n$.  Then  for  every $z\in U$ there exists a sequence $p_n \in P(f)$ such that
\[
\frac{\dist(p_n, U_n)}{\dist(f^n(z), \partial U_n)} \to 0, \quad \text{ as $n\to \infty$.} 
\]
In particular, if for some $d > 0$ we have $\dist(f^n(z),\partial U_n) < d$ for all $n$ $($for instance if the diameter of $U_n$ is uniformly bounded$)$, then $\dist(p_n, U_n)\to 0$ as $n$ tends to $\infty$. 
\end{ThmB}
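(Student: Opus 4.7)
The plan is to reduce Theorem~B to the technical Lemma~\ref{lem:1} by a contradiction argument. Set $r_n := \dist(f^n(z), \partial U_n)$ and suppose toward a contradiction that no sequence $p_n \in P(f)$ satisfies $\dist(p_n, U_n)/r_n \to 0$. Passing to a subsequence, we can find $\epsilon > 0$ and an infinite set $\mathcal{N} \subset \N$ such that $\dist(p, U_n) \ge \epsilon r_n$ for every $n \in \mathcal{N}$ and every $p \in P(f)$. Since $B(f^n(z), r_n) \subset U_n$, this is equivalent to saying that the Euclidean disks $D_n := B(f^n(z), (1+\epsilon)r_n)$ are disjoint from $P(f)$ for every $n \in \mathcal{N}$.

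Each $D_n$ is simply connected and, since the singular values of $f^n$ lie in $\bigcup_{k=0}^{n-1} f^k(S(f)) \subset P(f)$, contains no singular value of $f^n$. Hence the monodromy theorem produces a univalent inverse branch $g_n : D_n \to \C$ of $f^n$ with $g_n(f^n(z)) = z$. Complete invariance of $F(f)$ forces $g_n(B(f^n(z), r_n))$ into $F(f)$, and connectivity together with $z \in U$ forces it into the single component $U$. Koebe's distortion theorem on the inner subdisk $B(f^n(z), r_n) \subset D_n$ then gives universally bounded distortion for $g_n$ (with constants depending only on $\epsilon$), so $g_n(B(f^n(z), r_n))$ is a subset of $U$ whose shape is comparable to a Euclidean disk of radius $|g_n'(f^n(z))|\, r_n$ centered at $z$.

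This is exactly the configuration that Lemma~\ref{lem:1} is designed to rule out along a subsequence under the assumption $\dist(p, U_n) \ge \epsilon r_n$: the existence of ``oversized'' univalent pullbacks $g_n$ of $f^n$ with Koebe space $\epsilon$ beyond $r_n$ and image trapped in a single Fatou component $U$. Its conclusion is precisely the postsingular approximation $\dist(p_n, U_n)/r_n \to 0$ that we are after. Granted this, the first assertion of Theorem~B follows, and the ``in particular'' is immediate: when $\dist(f^n(z), \partial U_n) < d$, the first assertion yields $\dist(p_n, U_n) < d \cdot o(1) \to 0$.

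The main obstacle is Lemma~\ref{lem:1} itself. The natural approach is a Koebe/normal family argument: rescale each $g_n$ to a map of the fixed disk $B(0, 1+\epsilon)$ and pass to a subsequential limit whose image is forced by connectivity to sit inside $U$. The hard technical part will be converting this limiting rigidity, together with the bounded Koebe distortion of the $g_n$, into a postsingular point inside $B(U_n, \epsilon r_n)$, contradicting the assumed uniform separation on scale $r_n$.
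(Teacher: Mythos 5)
There is a genuine gap: you never actually apply Lemma~\ref{lem:1}. Lemma~\ref{lem:1} is a positive existence statement, available as a black box once proved; to use it you must exhibit a curve $\gamma$ from $f^n(z)$ to $\partial U_n$ with $\ell(\gamma)\le M\,\dist(f^n(z),\partial U_n)$, and then it hands you a point of $P(f)$ in $\D(\gamma,\varepsilon\ell(\gamma))$. You never specify such a curve, never pick $\varepsilon$ and $M$, and the contradiction you set up is never closed by invoking the lemma with those data. Instead, your middle paragraph reconstructs the univalent pullbacks $g_n$ and Koebe estimates --- material that belongs \emph{inside} the proof of Lemma~\ref{lem:1}, not in its application --- and your closing paragraph explicitly declares ``the main obstacle is Lemma~\ref{lem:1} itself,'' confirming you have not finished. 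You also misread the lemma's statement: it does not ``rule out oversized univalent pullbacks''; it asserts that a postsingular point lies near any short curve from $f^n(z)$ to the boundary.

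The paper's proof is a one-line application of Lemma~\ref{lem:1} followed by a diagonal. Set $r_n=\dist(f^n(z),\partial U_n)$, take $\gamma_n=[f^n(z),w_n]$ with $w_n$ a nearest boundary point, so $\ell(\gamma_n)=r_n$ and one may use $M=1$. For each $k$, Lemma~\ref{lem:1} with $K=\{z\}$, $\varepsilon=1/k$, $M=1$ gives $n_0(k)$ such that for $n\ge n_0(k)$ there is $p_n\in P(f)$ with $\dist(p_n,\gamma_n)<r_n/k$; since $\gamma_n\subset\overline{U_n}$ this yields $\dist(p_n,U_n)<r_n/k$. Setting $k(n)=\max\{k:n_0(k)\le n\}$, which tends to $\infty$, gives $\dist(p_n,U_n)/r_n<1/k(n)\to0$. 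If you insist on the contradiction route, you still must exhibit $\gamma_n$ and apply the lemma with $\varepsilon$ equal to the separation constant you extracted by negation --- the pullbacks $g_n$ and the normal-family rescaling you gesture at are irrelevant to that step and do not constitute the missing application.
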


The second application of Theorem~B concerns functions for which the Julia set and the postsingular set are apart from each other. More precisely, following Mayer and Urba\'nski \cite{mu07,mu10} we consider the class of {\em topologically hyperbolic} meromorphic functions. 
This class was also considered by Stallard in \cite{stallard-classC} in the context of entire maps, where she proved that in this case $|(f^n)'(z)| \to \infty$ for all $z \in J(f)$.

\begin{defn}
A meromorphic transcendental function $f$ is called {\em topologically hyperbolic} if
\[ 
\dist(P(f), J(f)\cap \C) >0.
\] 
\end{defn}

This condition can be regarded as a kind of weak hyperbolicity in the context of transcendental meromorphic functions, while hyperbolicity is (usually) defined by the condition that $\overline{P(f)}$ is bounded and disjoint from the Julia set.  In particular, hyperbolic maps are always in class $\B$. On the contrary, topologically hyperbolic maps could have sequences of (post)singular points in the Fatou set accumulating at infinity. See e.g.~the discussion on various kinds of hyperbolicity for transcendental maps in \cite{rs99}. Note also that topologically hyperbolic maps do not possess parabolic cycles, rotation domains or wandering domains which do not tend to infinity (otherwise they would have a constant finite limit function in $\overline{P(f)}\cap J(f)$, see \cite{bak02}). Examples of topologically hyperbolic maps include many Newton's methods of entire functions (see Propositions~\ref{prop:sine_tangent} and~\ref{prop:the_other}). 

The following corollary shows that Baker or wandering domains of topologically hyperbolic maps either contain postsingular points or arbitrarily large disks.  

\begin{CorC}\label{cor:disk}
Let $f$ be a topologically hyperbolic meromorphic map and $U$ be a Fatou component of $f$.  Denote by $U_n$ the Fatou component such that $f^n(U) \subset U_n$ and suppose that $U_n\cap P(f)=\emptyset$ for $n > 0$. Then for every compact set $K\subset U$, every $z\in K$ and every $r>0$ there exists $n_0$ such that for all $n\geq n_0$ 
\[
\mathbb D(f^n(z),r) \subset U_n.
\]
In particular, 
\[
\diam U_n \to \infty \quad \quad \text{and} \quad \quad \dist(f^n(z), \bd U_n) \to \infty
\]
for every $z\in U$, as $n \to \infty$.

\end{CorC}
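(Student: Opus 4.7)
My plan is to argue by contradiction, playing Theorem B off against the topological hyperbolicity assumption. Writing $\delta := \dist(P(f), J(f) \cap \C) > 0$, the key observation is that whenever $U_n \cap P(f) = \emptyset$ and $p \in P(f)$, one has $\dist(p, U_n) \geq \delta$: since $p \notin U_n$, the nearest point of $\overline{U_n}$ to $p$ lies on $\partial U_n$, and because the Euclidean distance from $p$ to $\infty$ is infinite, only the finite part of the boundary can realize this distance, giving $\dist(p, U_n) = \dist(p, \partial U_n \cap \C) \geq \dist(P(f), J(f) \cap \C) = \delta$, using the inclusion $\partial U_n \cap \C \subset J(f) \cap \C$. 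In particular this lower bound holds uniformly in $n \geq 1$ for any sequence $p_n \in P(f)$ arising from Theorem B.

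First I would fix a compact $K \subset U$, a point $z \in K$, and $r > 0$, and assume toward a contradiction that $\D(f^{n_k}(z), r) \not\subset U_{n_k}$ along some subsequence $n_k \to \infty$. Since $f^{n_k}(z)$ lies in the open set $U_{n_k}$, the disk must meet $\partial U_{n_k}$, so $\dist(f^{n_k}(z), \partial U_{n_k}) < r$ for every $k$. Next I would invoke Theorem B at the point $z$, which produces a sequence $p_n \in P(f)$ with
\[
\frac{\dist(p_n, U_n)}{\dist(f^n(z), \partial U_n)} \to 0.
\]
Along the subsequence $n_k$ the denominator is bounded above by $r$, forcing $\dist(p_{n_k}, U_{n_k}) \to 0$. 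This directly contradicts the uniform lower bound $\dist(p_n, U_n) \geq \delta$ established above, so the assumption fails and $\D(f^n(z), r) \subset U_n$ for every sufficiently large $n$.

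The "in particular" statements then drop out: letting $r$ be arbitrary in the main conclusion shows $\dist(f^n(z), \partial U_n) \to \infty$ for each fixed $z \in U$, and since the open disk of radius $\dist(f^n(z), \partial U_n)$ about $f^n(z)$ sits inside $U_n$, one obtains $\diam U_n \geq 2\dist(f^n(z), \partial U_n) \to \infty$. I do not expect any genuine obstacle in this corollary: all of the analytic work has been outsourced to Theorem B (hence to Lemma \ref{lem:1}), and what remains is the short geometric comparison between the $P(f)$-to-$U_n$ distance forced to zero by Theorem B and the $\delta$-separation granted by topological hyperbolicity. The only point deserving mild care is checking that $\dist(p_n, U_n)$ really coincides with $\dist(p_n, \partial U_n \cap \C)$, which requires noting both that $p_n \notin U_n$ and that $\infty$ contributes nothing to the Euclidean infimum.
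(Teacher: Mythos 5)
Your proof is correct and follows the same route as the paper's: invoke Theorem B, then use topological hyperbolicity together with $U_n \cap P(f) = \emptyset$ to get the uniform lower bound $\dist(p_n,U_n) \ge \delta > 0$, which forces $\dist(f^n(z),\partial U_n)\to\infty$. The only cosmetic difference is that you argue by contradiction against a bounded subsequence of $\dist(f^n(z),\partial U_n)$, whereas the paper reads off $\dist(f^n(z),\partial U_n) > \delta/\varepsilon_n \to\infty$ directly; the substance is identical.
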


Corollary~C may be used in many instances to exclude the presence of wandering domains for a given map. Examples can be found in Section~\ref{sec:ex}, where we show that certain Newton maps outside class $\calb$, with infinitely many poles, do not possess wandering Fatou components. In particular, we prove (see Theorem~D and Corollary~E) that Newton's method of the function $h(z) = ae^z + b z + c$ has no wandering domains for many parameters $a, b, c$, including the case when $a,b,c \in \R$, $a,b \neq 0$ and the map $h$ has a real zero.

\begin{ack}
The authors would like to thank the Centro Internazionale per la Ricerca Matematica (CIRM) in Trento for the hospitality during their visit within Research in Pairs program.
\end{ack}


\section{Preliminaries and tools} \label{sec:prelim}

We use the notation $\D(z, r) = \{w \in \C: |w-z| < r\}$ and 
\[
\D(A,r) = \bigcup_{z \in A} \D(z, r)
\]
for $z \in \C$, $A \subset \C$, $r >0$. By $[z_1, z_2]$ we denote the straight line segment connecting $z_1$ to $z_2$. The length of a rectifiable curve $\gamma$ is denoted by $\ell(\gamma)$. The symbol $\diam$ denotes the Euclidean diameter. We set
\[
\dist(z, B) = \inf\{|z-w|:  w \in B\}, \quad \dist(A, B) = \inf\{|z-w|: z\in A, \: w \in B\}
\]
for $z \in \C$, $A, B \subset \C$. The closure and boundary of a set $A \subset \C$ are denoted, respectively, by $\overline{A}$ and $\bd A$.

We use the following classical results on the distortion of conformal maps (see e.g.~\cite{carlesongamelin}). 

\begin{Koebe-dist} \label{koebe-dist}
Let $g:\D(a,r)\to\C$ be a univalent holomorphic map, $\rho \in (0,1)$ and
$z\in \overline{\D(a,\rho r)}$. Then
\begin{equation*}
\frac{1-\rho}{(1+\rho)^3}
|g'(a)|
\leq |g'(z)|
\leq
\frac{1+\rho}{(1-\rho)^3}
|g'(a)|.
\end{equation*}
\end{Koebe-dist}

\begin{Koebe-1/4}
Let $g:\D(a,r)\to\C$ be a univalent holomorphic map. Then
\begin{equation*}\label{koebe-1/4}
g(\D(a,r))\supset
\D\left(g(a),\tfrac14 |g'(a)|r\right).
\end{equation*}
\end{Koebe-1/4}

In this section we prove the following lemma, which is the main tool in the proofs of our results. It is a generalization of \cite[Lemma 3]{bergweiler-inv_dom}. The estimates are similar to the ones in \cite[Proposition 7.4]{lasse-helena}, which were formulated in terms of the hyperbolic metric.

\begin{lem}\label{lem:1}
Let $f$ be a transcendental meromorphic map and $U$ be a Fatou component of $f$.  Denote by $U_n$ the Fatou component such that $f^n(U) \subset U_n$. Then for every compact set $K \subset U$ and every $\varepsilon > 0$, $M \ge  1$ there exists $n_0 > 0$ such that for every $n \ge n_0$, every $z \in K$ and every curve $\gamma$ in $\C$ connecting $f^n(z)$ to a point $w \in \bd U_n$ with $\ell(\gamma) \le M \dist(f^n(z), \bd U_n)$ there exists a point
\[
p \in \D(\gamma, \varepsilon \ell(\gamma)) \cap P(f).
\]
\end{lem}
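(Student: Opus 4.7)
I plan to argue by contradiction. Suppose the conclusion fails: there exist $\varepsilon > 0$, $M \ge 1$, a compact set $K \subset U$, and sequences $n_k \to \infty$, $z_k \in K$ together with curves $\gamma_k$ joining $w_k := f^{n_k}(z_k)$ to a point $w_k' \in \bd U_{n_k}$ with $\ell(\gamma_k) \le M\,\dist(w_k, \bd U_{n_k})$, such that the tubes $V_k := \D(\gamma_k, \varepsilon\,\ell(\gamma_k))$ are disjoint from $P(f)$. Since the conclusion for a smaller $\varepsilon$ is stronger, I will assume $\varepsilon M < 1$, which ensures $\D(w_k, \varepsilon\,\ell(\gamma_k)) \subset U_{n_k}$; as points of $K \cap P(f)$ trivially satisfy the conclusion via $p = f^n(z) \in \gamma$, I also assume $K \cap P(f) = \emptyset$.

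Because $V_k$ misses $P(f)$ and hence all singular values of $f^{n_k}$, the local inverse branch $g_k$ of $f^{n_k}$ with $g_k(w_k) = z_k$ continues analytically along $\gamma_k$ and is univalent on every subdisk $\D(\zeta, \varepsilon\,\ell(\gamma_k)) \subset V_k$ for $\zeta \in \gamma_k$. Covering $\gamma_k$ by a chain of overlapping half-radius disks whose number depends only on $\varepsilon$, and iterating Koebe's distortion theorem, produces a constant $C = C(\varepsilon)$ with $|g_k'(\zeta)| \le C\,|g_k'(w_k)|$ for every $\zeta \in \gamma_k$, and hence $\ell(g_k \circ \gamma_k) \le C\,|g_k'(w_k)|\,\ell(\gamma_k)$. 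The lift $g_k \circ \gamma_k$ starts at $z_k \in K$; for $t$ small it lies inside $g_k(\D(w_k, \varepsilon\,\ell(\gamma_k))) \subset U$ (by $\varepsilon M < 1$, Fatou-set preservation, and connectedness), so its first exit from $U$ must occur through $\bd U$. Therefore $\ell(g_k \circ \gamma_k) \ge \dist(K, \bd U) =: \delta > 0$, and combining with the preceding estimate,
\[
|g_k'(w_k)|\,\ell(\gamma_k) \ge \delta/C. \tag{$\ast$}
\]

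The contradiction follows from $(\ast)$ via normality. Passing to a subsequence along which $f^{n_k}|_K$ converges locally uniformly to some $F$ and $z_k \to z_\infty \in K$, I distinguish cases according to the type of Fatou component. For rotation domains ($U$ a Siegel disk or Herman ring) the classical inclusion $\bd U_n \subset \overline{P(f)}$ immediately yields a postsingular point close to $w_k' \in \bd U_{n_k}$, hence inside $V_k$, contradicting the hypothesis. Otherwise the limit $F$ is locally constant and $F(z_\infty) \in \overline{P(f)} \cup \{\infty\}$: for attracting or parabolic basins $F$ equals the associated periodic cycle, for wandering components this is the Bergweiler--Haruta--Kriete--Meier/Zheng result \cite{bhkm, Zheng2003}, and for Baker or escaping wandering components \cite{bak02, Zheng2003} identifies $\infty$ as a derived point of $\bigcup_{k \ge 0} f^k(S(f))$. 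When $F(z_\infty) = w_\infty \in \C$, the hypothesis $\dist(w_k, P(f)) \ge \varepsilon\,\ell(\gamma_k)$ (from $w_k \in V_k$) combined with $w_k \to w_\infty \in \overline{P(f)}$ forces $\ell(\gamma_k) \to 0$, and in the attracting basin case this contradicts at once the inequality $\ell(\gamma_k) \ge \dist(w_k, \bd U_{n_k}) \to \dist(w_\infty, \bd U) > 0$.

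The remaining subcases --- parabolic or wandering with $\dist(w_k, \bd U_{n_k}) \to 0$, and the escape case $F \equiv \infty$ --- are handled by renormalisation: the rescaled maps $\zeta \mapsto (g_k(w_k + \ell(\gamma_k)\zeta) - z_k)/(\ell(\gamma_k)|g_k'(w_k)|)$ form a normal family of univalent functions on $\D(0, \varepsilon)$ by $(\ast)$ and Koebe's distortion theorem, fixing the origin with derivative of modulus $1$. Combining $(\ast)$, Koebe's one-quarter theorem and the bound $|g_k'(w_k)|\,\ell(\gamma_k) \le 4\,\dist(z_k, \bd U)/\varepsilon$ coming from $g_k(\D(w_k, \varepsilon\ell(\gamma_k))) \subset U$, one extracts a limiting univalent map and matches it against the rescaled lift (whose length remains bounded below) and the rescaled boundary $(\bd U - z_k)/(\ell(\gamma_k)|g_k'(w_k)|)$ to produce the desired contradiction. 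The escape case proceeds similarly after passing to the spherical metric. I expect this final renormalisation step, particularly its spherical variant, to be the main technical obstacle of the proof.
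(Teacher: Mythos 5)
Your setup—arguing by contradiction, normalising to avoid trivialities, continuing the inverse branch $g_k$ of $f^{-n_k}$ along $\gamma_k$, and bounding its distortion on $\gamma_k$ by chaining half-radius Koebe disks—matches the paper's. The estimate $(\ast)$ (a lower bound on $|g_k'(w_k)|\,\ell(\gamma_k)$ coming from the fact that the lifted curve must cross $\partial U$), together with the upper bound $|g_k'(w_k)|\,\ell(\gamma_k) \lesssim \operatorname{dist}(z_k,\partial U)$ that you mention later, are also in the paper's proof (as $r_j>c_3$ and $\ell(\gamma_j)|g_j'(\xi_j)|<c_4$). Up to that point the two arguments are essentially the same.

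After that you diverge, and this is where the gap lies. You branch into cases according to the type of Fatou component, invoking the known results of \cite{bhkm,Zheng2003,bak02} on limit functions, and then for the hard residual cases (parabolic basins, wandering domains with finite limit on $\partial U$, and the escape case $F\equiv\infty$) you propose a renormalisation of $g_k$ whose payoff is left as ``one extracts a limiting univalent map and matches it against the rescaled lift \dots to produce the desired contradiction.'' You yourself flag this step as the main technical obstacle. As written, there is no visible contradiction in the limit: a normal family of univalent maps on $\D(0,\varepsilon)$ fixing $0$ with derivative of modulus $1$ has plenty of univalent limits, and the constraints you impose from the rescaled lift and rescaled $\partial U$ are not strong enough to exclude them. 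For instance, in the parabolic case your argument shows $\ell(\gamma_k)\to 0$ and hence $|(f^{n_k})'(z_k)|\to 0$, but this is perfectly consistent with $f^{n_k}\to$ (parabolic point) on $K$, so no contradiction arises from that line alone. The case analysis is not only incomplete, it is also extrinsic to the problem: the lemma holds for all Fatou components, and a case-by-case treatment multiplies the work and the hypotheses.

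The paper avoids all of this with one more observation that your argument misses: look at the \emph{pullback of the boundary endpoint} $v_k:=g_k(w_k')$. Since $w_k'\in\partial U_{n_k}\subset J(f)$ and $J(f)$ is completely invariant, $v_k\in J(f)\cap\C$, in particular $v_k\notin U$. Compactness of $K\subset U$ plus the distortion bound give both $|z_k-v_k|\geq \operatorname{dist}(K,\partial U)>0$ and $|v_k|\leq c_5$ uniformly, so along a subsequence $v_k\to v\in\partial U\cap\C$. Combining the Koebe one-quarter theorem with your $(\ast)$ shows $g_k(D_k)$ contains a \emph{fixed} disk $D=\D(v,c_3/2)$ for all large $k$, and $f^{n_k}$ is defined and univalent on $D$ with bounded distortion. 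Since $v\in J(f)$, density of repelling periodic points gives a point $q\in D$ with $|(f^{n_k})'(q)|\to\infty$, hence $|(f^{n_k})'|\to\infty$ uniformly on $D$; Koebe one-quarter then forces $f^{n_k}(D)$ to contain arbitrarily large disks, which is incompatible with $f^{n_k}(D)\subset D_k$ and the disjointness $D_k\cap\overline{P(f)}=\emptyset$. This is a single, self-contained argument that requires no information about the type of the Fatou component. I would suggest you replace the case analysis and the renormalisation sketch by this endpoint-pullback argument; the distortion estimates you already have are exactly what is needed.
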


\begin{proof} We argue by contradiction. If the assertion of the lemma does not hold, then there exist a compact set $K \subset U$, numbers $\varepsilon > 0$, $M \ge 1$, sequences $n_j \to \infty$, $z_j \in K$ and curves $\gamma_j$ connecting the point $\xi_j := f^{n_j}(z_j)$ to $w_j \in \bd U_{n_j}$, such that 
\begin{equation}\label{eq:ass1}
\ell(\gamma_j) \le M \dist(\xi_j, \bd U_{n_j})
\end{equation}
and $\D(\gamma_j, \varepsilon \ell(\gamma_j))\cap P(f) = \emptyset$. Replacing $\varepsilon$ by $2\varepsilon$, we can actually assume
\begin{equation}\label{eq:ass2} 
\D(\gamma_j, \varepsilon \ell(\gamma_j))\cap \overline{P(f)} = \emptyset.
\end{equation}
Take the branch $g_j$ of $f^{-n_j}$ defined on $\D(\xi_j, \varepsilon \ell(\gamma_j))$, such that $g_j(\xi_j) = z_j$ and extend it along $\gamma_j$. Then $g_j$ is well-defined (single-valued) on 
\[
D_j := \D(\xi_j, \varepsilon \ell(\gamma_j)) \cup \gamma_j \cup \D(w_j, \varepsilon \ell(\gamma_j)).
\]
We claim that the distortion of $g_j$ on $\gamma_j$ (i.e.~$\sup_{u_1, u_2 \in \gamma_j} |g_j'(u_1)|/|g_j'(u_2)|$) is bounded by a constant independent of $j$.
To see the claim, take a parameterization of $\gamma_j$ by $t\in [0,1]$ and define the sequence $t_0, \ldots, t_k \in [0,1]$ inductively in the following way. Let $t_0 = 0$ and let 
\[
t_{s+1} = \inf\left\{t \in (t_s, 1]: |\gamma_j(t) - \gamma_j(t_s)| = \frac{\varepsilon \ell(\gamma_j)}{2}\right\}
\]
for $s\ge 0$, as long as the infimum is defined (i.e. the above set is non-empty). We notice that if $t_s$ is defined, then
\[
s\frac{\varepsilon\ell(\gamma_j)}{2} = 
|\gamma_j(t_1) - \gamma_j(t_0)| + \cdots +  |\gamma_j(t_s) - \gamma_j(t_{s-1})| \leq \ell(\gamma_j),
\]
which implies $s \le 2/\varepsilon$. This means that there exists $k >0$ such that $k \le 2/\varepsilon$ and $t_{k+1}$ is not defined, i.e.~$|\gamma_j(t) - \gamma_j(t_k)| < \varepsilon \ell(\gamma_j)/2$ for every $t \in (t_k, 1]$. Setting $t_{k+1} = 1$, we have a sequence $0 = t_0 <  \cdots < t_{k+1} = 1$, such that
\[
\gamma_j([t_s, t_{s+1}]) \subset \overline{\D\left(\gamma_j(t_s), \frac{\varepsilon \ell(\gamma_j)}{2}\right)}
\]
for every $s = 0, \ldots, k$. Since $g_j$ extends to $\D(\gamma_j(t_s), \varepsilon \ell(\gamma_j))$, the Koebe Distortion Theorem implies that the distortion of $g_j$ on $\gamma_j([t_s, t_{s+1}])$ is bounded by a constant $c_0>1$, which is independent of $j$ and $s$. Then the distortion of $g_j$ on $\gamma_j$ is bounded by $c_1 :=c_0^{k+1} \le c_0^{1 + 2/\varepsilon}$.

Let
\[
v_j = g_j(w_j), \qquad r_j = \sup\{r > 0: \D(v_j, r) \subset g_j(D_j)\}.
\]
Since $\D(w_j, \varepsilon \ell(\gamma_j)) \subset D_j$, by the Koebe One-quarter Theorem, we have
\begin{equation}\label{eq:r_j}
r_j \ge \frac{\varepsilon\ell(\gamma_j)}{4} |g_j'(w_j)|.
\end{equation}
Moreover,
\begin{equation}\label{eq:c}
|z_j - v_j| \le { \sup_{u\in\gamma_j}|g_j'(u)|} \; \ell(\gamma_j)  \le c_1 |g_j'(w_j)|\; \ell(\gamma_j).
\end{equation}
As $v_j \notin U$, $z_j \in K$ and $K \subset U$ is compact, we have
\[
|z_j - v_j| > c_2
\]
for some $c_2 > 0$ independent of $j$, which together with \eqref{eq:r_j} and \eqref{eq:c} gives
\begin{equation}\label{eq:r}
r_j > c_3{:=\frac{\varepsilon c_2}{4c_1}}
\end{equation}
independent of $j$. By \eqref{eq:ass1} and \eqref{eq:ass2}, we have 
\[
\D(\xi_j, \min(\varepsilon, 1/M) \ell(\gamma_j)) \subset D_j \cap U_n,
\]
and so, by the Koebe One-quarter Theorem, we conclude
\begin{equation}\label{eq:inU}
\D\left(z_j, \frac{\min(\varepsilon, 1/M) \ell(\gamma_j)}{4} |g_j'(\xi_j)|\right) \subset U.
\end{equation}
On the other hand, as $K \subset U$ is compact, we have $\max_{z\in K} \dist(z,\partial U)<\infty$, so \eqref{eq:inU} implies
\begin{equation}\label{eq:c4}
\ell(\gamma_j) |g_j'(\xi_j)| < c_4
\end{equation}
for some $c_4 > 0$ independent of $j$. This, together with \eqref{eq:c} and the compactness of $K$, gives
\[
|v_j| \le |z_j - v_j| + |z_j| \le c_5
\]
for some $c_5 > 0$ independent of $j$. Hence, taking a subsequence of $n_j$ we can assume that $v_j \to v \in \bd U \cap \C$, and by \eqref{eq:r} we have
\[
D:=\D\left(v, \frac{c_3} 2\right) \subset g_j(D_j)
\]
for every sufficiently large $j$. In particular, $v \in D \cap J(f)$ and $f^{n_j}$ are defined and univalent on $D$ for all large $j$, with the distortion bounded by a constant independent of $j$. By the density of periodic sources in $J(f)$, we have $|(f^{n_j})'| \to \infty$ on $D$ as $j \to \infty$, so by the Koebe 1/4 Theorem, every bounded set in $\C$ is contained in $f^{n_j}(D)$ for sufficiently large $j$, which clearly contradicts the univalency of $f^{n_j}$ on $D$.
\end{proof}

\section{Proofs of main results}\label{sec:A}

\begin{proof}[Proof of Theorem \rm A]

Fix a point $z \in U$ and let $K$ be a curve connecting $z$ to $f^p(z)$ in $U$. Then 
\[
\Gamma = \bigcup_{m = 0}^\infty f^{pm}(K) \subset U
\]
is a curve starting at $z$ and tending to $\infty$. Since $\C \setminus U$ has an unbounded component,  
there exists $r_0 > 0$ such that for every $r > r_0$ there is a point $w \in \bd U$ with $|w| = r$. Replacing $z$ by $f^{pn}(z)$ for some $n > 0$ we can assume 
\begin{equation}\label{eq:r_0}
|\zeta| > r_0 \quad \text{for every} \quad \zeta \in \Gamma.                                                                                                                                                            \end{equation}

Take a positive integer $k$ and use Lemma~\ref{lem:1} for the set $K$ and 
\[
\varepsilon = \frac 1 k, \quad M = 2\pi k, 
\]
to find $N(k)$ such that for every $m \ge N(k)$ and every curve $\gamma$  connecting $f^{pm}(z)$ to a point in $\bd U$ with $\ell(\gamma) \le M \dist(f^{pm}(z), \bd U)$ there exists a point in $\D(\gamma, \varepsilon \ell(\gamma)) \cap P(f)$.
Increasing $N(k)$ inductively, we can assume 
\[
|f^{pN(k)}(z)| < |f^{pN(k+1)}(z)|
\]
for all $k$.

\begin{figure}[hbt!]
\centering
\def\svgwidth{0.9\textwidth}
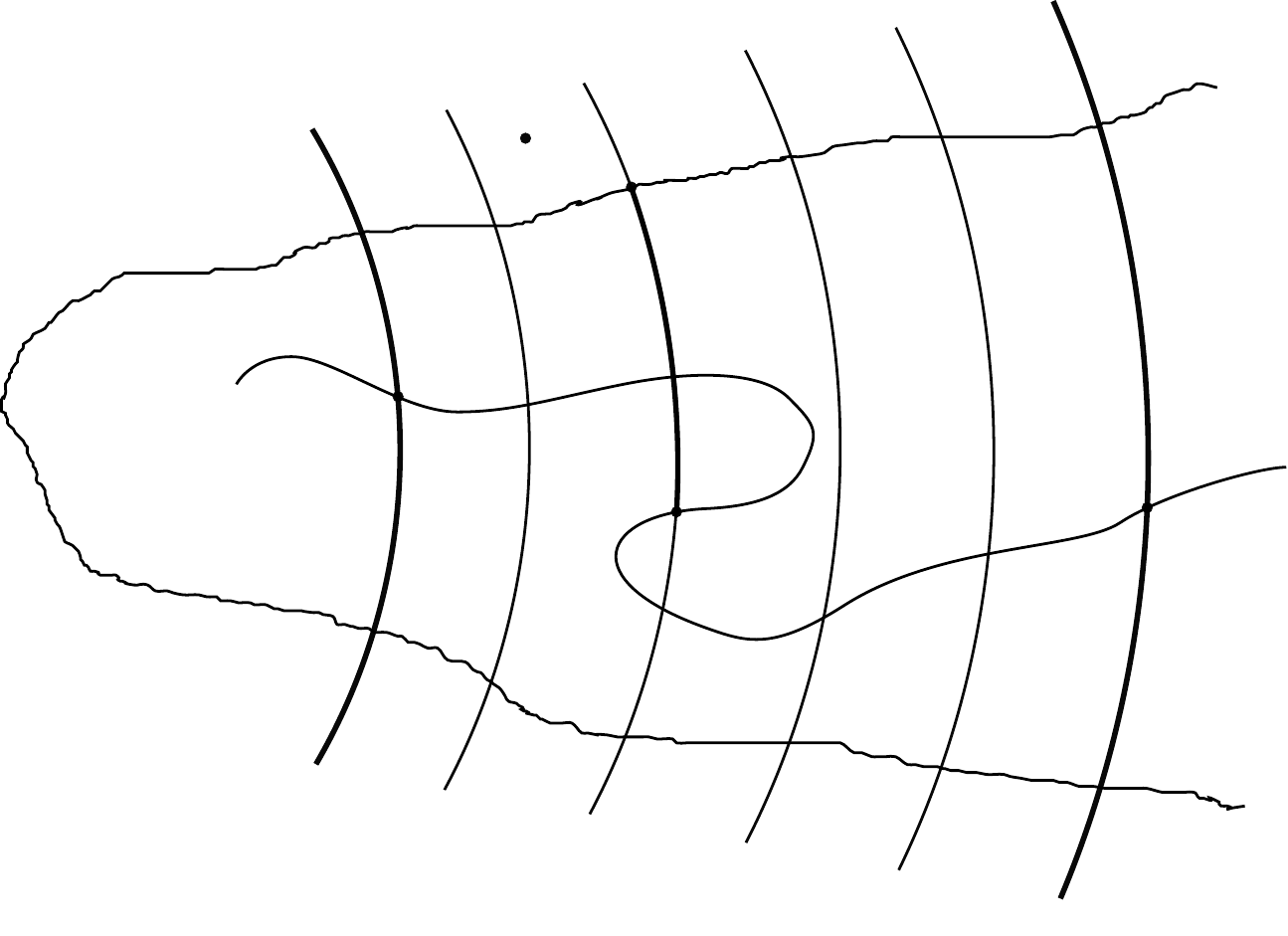
\caption{Construction of points $u_j(k)$.}
\label{fig:baker}
\end{figure}

Choose positive numbers
\[
r_1(k) < \ldots < r_{l_k}(k) \in \left[|f^{pN(k)}(z)|, |f^{pN(k+1)}(z)|\right]
\]
such that $r_1(k) = |f^{pN(k)}(z)|$, $r_{l_k}(k) = |f^{pN(k+1)}(z)|$ and
\begin{equation}\label{eq:r(j)}
\frac{r_{j+1}(k)}{r_j(k)} < 1 + \frac{1}{k}, \qquad j = 1, \ldots, l_k - 1
\end{equation}
Note that, by definition, 
\begin{equation}\label{eq:r_k}
r_{l_k}(k) = r_1(k+1)
\end{equation}
and
\begin{equation}\label{eq:to_infty}
\lim_{k \to \infty} \min_j r_j(k) = \lim_{k \to \infty} r_1(k) = \infty.
\end{equation}

Take $j \in \{1, \ldots, l_k-1\}$. Then there is a point $\zeta_j(k) \in \bigcup_{m = N(k)}^{N(k+1)} f^{pm}(K)$ with 
\[
|\zeta_j(k)| = r_j(k).
\]
Now define a curve $\gamma_j(k)$ joining $\zeta_j(k)$ to $\bd U$ in the following way. If $\dist(\zeta_j(k), \bd U) \le r_j(k)/k$, then we set $\gamma_j(k)$ to be the straight line segment from $\zeta_j(k)$ to the nearest point of $\bd U$. In this case we have 
\[
\ell(\gamma_j(k)) = \dist(\zeta_j(k), \bd U) \le \frac{r_j(k)}{k}.
\]
In the complementary case $\dist(\zeta_j(k), \bd U) > r_j(k)/k$, note that $r_j(k) > r_0$ by \eqref{eq:r_0}, so there is a point $w_j(k) \in \bd U$ with 
\[
|w_j(k)| = r_j(k).
\]
Let $\gamma_j(k)$ be a circle arc in $\bd \D(0,r_j(k))$ connecting $\zeta_j(k)$ to $w_j(k)$. 
Changing $w_j(k)$ if necessary, we can assume $\gamma_j(k) \setminus \{w_j(k)\} \subset U$.
In this case we have 
\[
\ell(\gamma_j(k)) \le 2\pi r_j(k) \le M \dist(\zeta_j(k), \bd U).
\]
Since in both cases $\zeta_j(k) \in f^{pm}(K)$ for $m \ge N(k)$ and  $\ell(\gamma_j(k)) \le M \dist(\zeta_j(k), \bd U)$, by Lemma~\ref{lem:1} and the definition of $N(k)$ we conclude that there is a point 
\[
u_j(k) \in P(f) \cap \D\left(\gamma_j(k), \frac{\ell(\gamma_j(k))}{k}\right).
\]
By the definition of $\gamma_j(k)$,
\[
\gamma_j(k) \subset \overline{\D\left(0, r_j(k) + \frac {r_j(k)} k\right)} \setminus \D\left(0, r_j(k) - \frac {r_j(k)} k\right) 
\]
and
\[
\ell(\gamma_j(k)) \le 2 \pi r_j(k),
\]
so
\begin{equation}\label{eq:u1}
1 - \frac{2\pi + 1}{k} < \frac{|u_j(k)|}{r_j(k)} < 1 + \frac{2\pi +1}{k},
\end{equation}
and by \eqref{eq:r(j)} and \eqref{eq:r_k},
\begin{equation}\label{eq:u2}
\left|\frac{u_{j+1}(k)}{u_j(k)}\right| < 1 + \frac{c}{k}
\end{equation}
for some $c > 0$ independent of $k,j$, where we define
\[
u_{l_k}(k) = u_1(k+1).
\]
Moreover, by the definition of $\gamma_j(k)$, 
\begin{equation}\label{eq:u3}
\dist(u_j(k), U) \le \frac{2 \pi r_j(k)}{k}.
\end{equation}

To define a suitable sequence of points $p_n \in P(f)$ which satisfies the assertions of the theorem, it is sufficient to renumber 
the sequence
\[
u_1(1), \ldots, u_{l_1 - 1}(1), u_1(2), \ldots, u_{l_2 - 1}(2), \ldots.
\]
This can be done formally by setting
\[
p_n = u_{j(n)}(k(n)), \ldots, n = 1, 2, \ldots,
\]
where $j(n), k(n)$ are the unique numbers satisfying
\[
1 + \sum_{s = 1}^{k(n)-1} (l_s - 1) \le n <  1 + \sum_{s = 1}^{k(n)} (l_s - 1) \quad \text{and} \quad j(n) = n - \sum_{s = 1}^{k(n)-1} (l_s - 1).
\]
Clearly, $k(n) \to \infty$ as $n \to \infty$. 
By \eqref{eq:to_infty} and \eqref{eq:u1}, $|p_n| \to \infty$ as $n \to \infty$, while \eqref{eq:u2} implies $|p_{n+1}/p_n| \to \infty$. Finally, \eqref{eq:u1} and \eqref{eq:u3} give $\dist(p_n, U)/|p_n| \to 0$. The proof of Theorem~A is complete.

\end{proof}
\begin{rem} It would be interesting to determine whether the assumption that $\C \setminus U$ has an unbounded connected component is necessary.
\end{rem}

\begin{proof}[Proof of Theorem~{\rm B}]
Take a positive integer $k$ and $z\in U$. We use Lemma~\ref{lem:1} for $\varepsilon = 1/k$, $M = 1$ and $\gamma_n = [f^n(z), w_n]$, where 
\[
|f^n(z) - w_n| = \dist(f^n(z), \bd U_n) =: r_n.
\]
By Lemma~\ref{lem:1} we know that there exists $n_0(k)$, such that for every $n \ge n_0(k)$ one can find a point 
\[
p_n \in \D\left([f^n(z), w_n], \frac{r_n}{k}\right) \cap P(f).
\]
Clearly, we can assume $n_0(k) \to \infty$ as $k \to \infty$. Since $[f^n(z), w_n] \setminus \{w_n\} \subset U_n$ by definition, we have 
\begin{equation}\label{eq:in}
\dist(p_n, U_n) <\frac{r_n}{k}.
\end{equation}
Now for any $n >0$ define $k(n) = \max\{k: n_0(k) \le n\}$. Note that the maximum is attained since $n_0(k) \to \infty$ as $k \to \infty$. Obviously, $k(n) \to \infty$ as $n \to \infty$ and $n \ge n_0(k(n))$, so by \eqref{eq:in} we have
\[
\frac{\dist(p_n, U_n)}{r_n} <  \frac{1}{k(n)} \to 0
\]
as $n \to \infty$, which ends the proof.
\end{proof}
 
\begin{proof}[Proof of Corollary~{\rm C}]
By Theorem~B, there exists a sequence of points $p_n \in P(f)$ such that
\[
\frac{\dist(p_n, U_n)}{\dist(f^n(z), \bd U_n)} < \varepsilon_n,
\]
where $\varepsilon_n \to 0$ as $n \to \infty$. Since $f$ is topologically hyperbolic and $U_n \cap P(f) = \emptyset$, we have $\dist(p_n, U_n) > c$ for some fixed $c > 0$, which implies $\dist(f^n(z), \bd U_n) > c/\varepsilon_n \to \infty$.
\end{proof}

\section{Examples}\label{sec:ex}

As an application of the above results we will consider some examples of topologically hyperbolic maps for which we can rule out the existence of wandering domains. All the examples are Newton's methods for some transcendental entire functions. Some of the maps considered in this section were studied in detail in  \cite[Examples~7.2--7.3]{bfjkaccesses}. Here we show additionally that none of them has wandering domains. 

The first proposition was proved previously by Bergweiler and Terglane in \cite{berter} by the use of Sullivan's quasiconformal deformations technique. Here we present an elementary proof based on Corollary~C.

\begin{prop} \label{prop:sine_tangent}
The map
\[
N_f(z)=z-\tan(z),
\]
which is Newton's method of the map $f(z)=\sin (z)$, has no wandering domains. 
\end{prop}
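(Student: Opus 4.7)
The plan is to apply Corollary~C to any hypothetical wandering domain and obtain a contradiction from the pole structure of $N_f$. I first identify the postsingular set and show topological hyperbolicity. Since $N_f'(z) = -\tan^2 z$, the critical points of $N_f$ are the zeros $k\pi$ of $\sin$, and the critical values $N_f(k\pi) = k\pi$ are superattracting fixed points. A direct check shows $N_f$ has no finite asymptotic values: as $\Im z \to \pm\infty$ one has $\tan z \to \mp i$, so $N_f(z) \to \infty$; on any bounded horizontal strip, $|\tan z|$ is either bounded (so $|N_f(z)| \to \infty$ with $|\re z|$) or blows up near a pole (again forcing $|N_f| \to \infty$). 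Hence $P(N_f) = \pi\Z$. Each $k\pi$ lies in the interior of its own superattracting immediate basin, and the functional equation $N_f(z+\pi) = N_f(z) + \pi$ renders $J(N_f)$ invariant under translation by $\pi$, so $\dist(P(N_f), J(N_f) \cap \C) = \dist(0, J(N_f)) > 0$, verifying topological hyperbolicity.

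Now suppose for contradiction that $U$ is a wandering domain; fix $z \in U$ and set $z_n = N_f^n(z)$. Each $U_n$ must be disjoint from $P(N_f) = \pi\Z$, since otherwise $U_n$ would be the (periodic) basin of some superattracting fixed point. Corollary~C then applies and yields, for every $r > 0$, an index $n_0(r)$ such that $\D(z_n, r) \subset U_n$ for all $n \ge n_0(r)$. Because the Fatou components $U_n$ along a wandering orbit are pairwise disjoint, the disks $\D(z_n, r)$ and $\D(z_{n+1}, r)$ must be disjoint for $n \ge n_0(r)$, so $|z_{n+1} - z_n| \ge 2r$ eventually. As $r$ is arbitrary and $z_{n+1} - z_n = -\tan z_n$, this gives $|\tan z_n| \to \infty$.

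To close the argument, observe that $|\tan z|$ is uniformly bounded on each region $\{|\Im z| \ge \delta\}$, so $\Im z_n \to 0$; combined with the local form $\tan w \sim -1/(w - p)$ at each pole $p = \tfrac{\pi}{2} + k\pi$, this forces $\dist\bigl(z_n, \tfrac{\pi}{2} + \pi\Z\bigr) \to 0$. Picking a pole $p_n$ nearest to $z_n$, we have $|z_n - p_n| \to 0$. Applying Corollary~C once more with $r = 1$ gives $\D(z_n, 1) \subset U_n \subset F(N_f)$ for all large $n$; but eventually $p_n \in \D(z_n, 1)$, while $p_n \in J(N_f)$ since $N_f^2$ is not defined at the poles of $N_f$. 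This contradiction rules out wandering domains. The main technical point is the last localization step, i.e., passing from the divergence $|\tan z_n| \to \infty$ to $z_n$ being close to an \emph{individual} pole; this uses that the zeros of $\cos$ are precisely $\tfrac{\pi}{2} + \pi\Z$ together with the quantitative bounds on $\cos z$ in horizontal strips.
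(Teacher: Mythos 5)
Your proof is correct, and it departs from the paper's argument in the step that actually produces the contradiction. The paper cites (from \cite{bfjkaccesses}) the stronger structural fact that the invariant vertical lines $\Re z = \tfrac{\pi}{2}+k\pi$ through the poles lie entirely in $J(N_f)$; since these lines are spaced $\pi$ apart, Corollary~C's arbitrarily large disks in $W_n$ cannot avoid them, and the contradiction is immediate. You instead use only the elementary fact that the \emph{poles themselves} lie in the Julia set, and compensate by extracting more from Corollary~C: disjointness of the consecutive Fatou components $U_n$ forces $|z_{n+1}-z_n|=|\tan z_n|\to\infty$, which by the explicit bounds on $|\tan|$ off $\tfrac{\pi}{2}+\pi\Z$ pins $z_n$ against an individual pole $p_n$, putting $p_n$ inside $\D(z_n,1)\subset F(N_f)$. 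Your route is more self-contained (no reliance on the lines being Julia), at the cost of a slightly longer dynamical argument; the paper's is shorter once the line structure is granted. Your derivation of topological hyperbolicity is also a small variation: you use the conjugation $N_f(z+\pi)=N_f(z)+\pi$ to reduce $\dist(P(N_f),J(N_f))$ to $\dist(0,J(N_f))>0$, where the paper instead uses periodicity to get a uniform radius $r$ with $|N_f'|\le\tfrac12$ on $\D(k\pi,r)$; both are fine. One point worth tightening in a full write-up: your sentence ruling out asymptotic values ("blows up near a pole, again forcing $|N_f|\to\infty$") glosses over a potential cancellation between $z$ and $\tan z$ near far-out poles; the clean argument is that a curve $\gamma\to\infty$ with $N_f(\gamma)$ bounded would have to shadow the poles, but the transitions between consecutive poles force $|\tan\gamma|$ bounded and hence $|N_f(\gamma)|\to\infty$.
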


\begin{proof}

The map $N_f$ has all its fixed points located at $c_k:=k\pi,\ k\in \mathbb Z$ (the zeros of $\sin(z)$). All of them are critical points (in fact $N_f'(c_k)=N_f^{\prime\prime}(c_k)=0$), and they are the only ones. 
Moreover, $N_f$ has no finite asymptotic values. 
One can also prove that the vertical lines 
$$
r_k(t):=\frac{\pi}{2}+k\pi + {\rm i}t, \quad t\in \mathbb R, \ k\in \mathbb Z,
$$
are invariant, contain all poles of $N_f$ (located at $p_k:=\frac{\pi}{2}+k\pi$) and are contained in the Julia set of $N_f$ (see \cite{bfjkaccesses}). In each of the vertical strips bounded by the lines $r_k$ there is an (unbounded) attracting basin of the super-attracting fixed point $c_k$. Since
$$
N_f^{\prime}(z)=1-\frac{1}{\cos^2(z)},
$$
it is clear from the periodicity of the function that there exists $r>0$ such that for every $k$, we have $|N_f^{\prime}(z)|\leq 1/2$ for $z$ in $\mathbb D(c_k,r)$ and we conclude that this disk is contained in the Fatou set of $N_f$. 

It follows that $N_f$ is topologically hyperbolic and therefore if $W$ is a wandering domain for $N_f$ and $z\in W$, then $N_f^n(z)$ tends to infinity as $n\to\infty$. Moreover, Corollary~C implies that for any $R>0$ we have $\mathbb D(N_f^n(z),R)\subset W_n$ for every sufficiently large $n$, where $W_n$ is the Fatou component such that $N_f^n(W)\subset W_n$. This contradicts  the fact that the lines $r_k$, which are at distance $\pi$ from each other, are contained in the Julia set. 

\end{proof}

The second proposition is proved in a similar way.

\begin{prop} \label{prop:the_other}
The map 
\[
N_g(z)=z + i + \tan(z),   
\]
which is the Newton's method of the map $g(z)=\exp\left(-\int_{0}^{z} \frac{\text{\rm d}u}{i + \tan(u)} \right)$,  
has no wandering domains.
\end{prop}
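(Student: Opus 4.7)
The plan is to mirror the argument of Proposition~\ref{prop:sine_tangent} as closely as possible. First, I would collect the basic singular-value data of $N_g$. The poles lie at $z = \pi/2 + k\pi$, $k \in \Z$. Since the equation $\tan(z) = -i$ has no solution in $\C$, the only asymptotic value of $N_g$ in $\chat$ is $\infty$. From $N_g'(z) = 1 + \sec^2(z)$, the critical points satisfy $\cos^2(z) = -1$, i.e.\ they form the set $\{\pi/2 + k\pi \pm i\log(1+\sqrt 2) : k \in \Z\}$.

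Second, I would identify two families of invariant vertical lines. A direct computation using the identities $\tan(it) = i\tanh(t)$ and $\tan(\pi/2 + it) = i\coth(t)$ shows that both
\[
\ell_k := \{k\pi + it : t \in \R\} \quad \text{and} \quad m_k := \{\pi/2 + k\pi + it : t \in \R\}
\]
are $N_g$-invariant. On $\ell_k$ the transverse multiplier is $N_g'(k\pi + it) = 1 + \operatorname{sech}^2(t) \in (1,2]$, so each $\ell_k$ is transversely repelling and therefore $\ell_k \subset J(N_g)$. On $m_k$ the transverse multiplier is $1 - 1/\sinh^2(t)$, which lies in $(0,1)$ whenever $|t| > \log(1+\sqrt 2)$; in that range the on-line dynamics $t \mapsto t + 1 + \coth(t)$ sends $|t|$ monotonically to $\infty$, so those portions of $m_k$ sit inside invariant Baker domains $B_k$ on which $N_g^n \to \infty$. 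These play the role of the attracting basins in Proposition~\ref{prop:sine_tangent}.

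Third, I would verify topological hyperbolicity. Using $\coth(\log(1+\sqrt 2)) = \sqrt 2$, one checks explicitly that each critical value $N_g(\pi/2+k\pi\pm i\log(1+\sqrt 2))$ lies on $m_k$ at $|t| \ge 1 + \sqrt 2 - \log(1+\sqrt 2) > \log(1+\sqrt 2)$, that is, in the attracting portion of $m_k$. Hence the full postsingular orbit lives on the Baker-domain part of $\bigcup_k m_k$ with $|t|$ bounded below by a positive constant and escaping to $\infty$. Combined with the translation symmetry $N_g(z+\pi) = N_g(z)+\pi$, this should give $\dist(P(N_g), J(N_g) \cap \C) > 0$, so that $N_g$ is topologically hyperbolic.

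Finally, if $W$ were a wandering Fatou component of $N_g$ and $z \in W$, then by Corollary~C, for every $R > 0$ and every sufficiently large $n$ the disk $\D(N_g^n(z), R)$ would be contained in $W_n$. But every $W_n$ is a Fatou component and therefore lies in one of the open strips of width $\pi$ bounded by two consecutive repelling lines $\ell_k, \ell_{k+1} \subset J(N_g)$, giving a contradiction. The main obstacle in carrying out this plan is the third step: to conclude topological hyperbolicity one must control not only the distance from $P(N_g)$ to the lines $\ell_k$ (which is trivially $\pi/2$ from the $m_k$), but also to the rest of $J(N_g)$, namely the repelling central segments of the $m_k$ near the poles and all of their preimages in $\C$. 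Here the $\pi$-periodicity in the real direction and the fact that the postsingular iterates escape along $m_k$ at a definite speed should make the required estimate tractable, possibly via the detailed description of $J(N_g)$ obtained in \cite[Example 7.3]{bfjkaccesses}.
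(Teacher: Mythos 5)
Your plan correctly identifies the critical points, poles, and invariant vertical lines, and the overall strategy (apply Corollary~C, produce large disks inside $W_n$, derive a geometric contradiction) is the same as the paper's; but it contains a genuine error at the very step on which the contradiction rests. You claim that $\ell_k = \{k\pi + it : t\in\R\}$ lies in $J(N_g)$ because the transverse multiplier $N_g'(k\pi + it) = 1 + \operatorname{sech}^2 t$ lies in $(1,2]$. The inference from ``$|f'|>1$ along a curve'' to ``the curve is in the Julia set'' is not valid in general, and here the conclusion is in fact false: the on-line dynamics $t \mapsto t + 1 + \tanh t$ pushes every point to $+\infty$, the multiplier tends to $1$ as $|t|\to\infty$ so there is no persistent transverse repulsion, and, as the paper quotes from \cite[Example~7.3]{bfjkaccesses}, one actually has $\H^+ \cup \bigcup_k \ell_k \subset U$ for a single invariant Baker domain $U$. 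Thus the $\ell_k$ lie in the Fatou set, not the Julia set, and your closing sentence---``$W_n$ lies in a strip bounded by $\ell_k,\ell_{k+1}\subset J(N_g)$''---is built on a false premise.

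The paper reaches the same contradiction by a different route: since $\ell_k \subset U$, a disk $\D(N_g^n(z),R)\subset W_n$ with $R>\pi$ would meet some $\ell_k$ and hence meet $U$, forcing $W_n=U$; this is impossible because $U$ is an invariant Baker domain while $W$ is wandering. In Proposition~\ref{prop:sine_tangent} the geometric obstruction really is a family of lines inside $J$, but in the present case the obstruction is collision with a \emph{periodic Fatou component}, and transplanting the sine argument verbatim, as you did, misidentifies the geometry. With this substitution your final step can be repaired, but as written it does not go through.

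Two smaller points. Your bound $|t|\ge 1+\sqrt2-\log(1+\sqrt2)$ for the critical values on $m_k$ is an arithmetic slip: the lower critical value sits at $t=1-\sqrt2-\log(1+\sqrt2)$, giving $|t|=\sqrt2-1+\log(1+\sqrt2)\approx 1.30$, which is below your stated threshold $\approx 1.53$; the qualitative claim $|t|>\log(1+\sqrt2)\approx 0.88$ that you actually need is still true. And the topological-hyperbolicity step, which you rightly flag as the delicate one, is handled in the paper by exhibiting the explicit forward-invariant sets $S_k=\bigl\{\,|\Re z - \pi/2 - k\pi|<\pi/8,\ \Im z < -(\ln 2)/2\,\bigr\}$ around the lower half-lines $s_k^-\subset m_k$, which trap the lower postsingular orbit at a definite distance from $J(N_g)$, while the upper postsingular orbit lies in $\H^+\subset U$ where its distance to $J(N_g)$ is bounded below by its (increasing) imaginary part.
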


\begin{proof}
In \cite{bfjkaccesses} it is shown that $N_g$ has an invariant Baker domain $U$ such that
$$
\mathbb H^+ \cup L  \subset U,
$$
where $\mathbb H^+$ is the upper halfplane and
$$
L = \bigcup_{k\in \mathbb Z}l_k \quad \text{for} \quad l_k=\{\Re(z)=k\pi,\ k\in \mathbb Z \}.
$$
Note also that the vertical lines
\[
s_k = \left\{z \in\C: \Re (z) =\frac \pi 2+k\pi\right\}
\]
are invariant under $N_g$. Each of the lines contains the pole $p_k:=\pi/2+k\pi$ and the two critical points $c_k=\pi/2+k\pi\pm{ i}y_k$, where $\pm y_k\approx \pm 0.8814$ are, respectively, the solutions of the equation $\exp(-y)-\exp(y)=\pm 2$. Moreover, $N_g$ has no other critical points and no finite asymptotic values. From the equality 
\begin{equation*}
\tan (x+i y)= \frac{\sin 2x}{\cos 2x + \cosh 2y} + 
i \left( \frac{\sinh 2y}{\cos 2x + \cosh 2y}\right)
\end{equation*}
it follows that all points $z$ in the halflines $s_k^{-} :=\{z\in s_k: \Im(z) < 0\}$ satisfy $\Im (N_g(z)) <  \Im (z)$. Since $N_g$ has no fixed points, this implies that $\Im(N_g^n(z))$ tends to $-\infty$ as $n \to \infty$ for $z\in s_k^-$. Moreover, one can check that the sets
\[
S_k := \left\{z\in\C : \left|\Re (z) - \frac \pi 2 - k\pi\right| < \frac \pi 8, \; \Im(z) < -\frac {\ln 2} 2 \right\} 
\]
satisfy $N_g(\overline{S_k}) \subset S_k$ (see~\cite{bfjkaccesses}), which implies that $s_k^- \cup S_k$ is contained in an invariant Baker domain of $N_g$. We conclude that $N_g$ is topologically hyperbolic. Hence, $N_g$ has no wandering domains since for every such domain $W$, its forward images $W_n:=f^n(W)$ would contain arbitrary large discs which cross the lines $l_k$, a contradiction.   
\end{proof}

The main result of this section is to exclude the presence of wandering domains for the following class of maps. 

\begin{ThmD}Let $h(z)=ae^z + bz + c$ for $a,b,c \in \C$, $a,b \neq 0$ and let 
\begin{equation}\label{eq:example_h}
N_h(z) = z - \frac{h(z)}{h'(z)} =\frac{z-1 - \alpha e^{-z}}{1+ \beta e^{-z}},
\end{equation}
where $\alpha = c/a$, $\beta = b/a$, be its Newton's method. If $a,b,c \in \C$, $a,b \neq 0$, and the asymptotic value $u = -c/b = -\alpha/\beta$ of $N_h$ satisfies $\inf_{n\ge 0}\dist(N_h^n(u), J\left(N_h\right)) > 0$, then $N_h$ has no wandering domains.
\end{ThmD}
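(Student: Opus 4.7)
My plan is to apply Corollary~C to any hypothetical wandering domain of $N_h$, forcing its iterates to contain arbitrarily large disks, and to contradict this using the location of the poles of $N_h$ on a single vertical line inside $J(N_h)$.

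First I would record the structural features of $N_h$. Since $h''(z) = ae^z$ never vanishes, the identity $N_h'(z) = h(z)h''(z)/h'(z)^2$ shows that the critical points coincide with the zeros of $h$, each a super-attracting fixed point. The closed form~\eqref{eq:example_h} exposes the unique finite asymptotic value $u = -\alpha/\beta$ (from the tract $\Re(z) \to -\infty$), the asymptotic form $N_h(z) = z - 1 + O(e^{-z})$ on $\Re(z) \to +\infty$, and the poles $p_k = \log(-\beta) + 2\pi i k$ of $N_h$, all lying in $J(N_h)$ along the vertical line $\{\Re(w) = \log|\beta|\}$. I would then verify that $N_h$ is topologically hyperbolic: the hypothesis controls $\overline{\{N_h^n(u):n\ge 0\}}$, while the zeros of $h$ sit at uniform positive distance from $J(N_h)$ because they escape with $\Re(z_k) \to +\infty$ (forced by $ae^{z_k} = -(bz_k+c)$) and the local normal form at each super-attracting fixed point, combined with $N_h''(z_k) \to 1$ as $|z_k| \to \infty$, provides a uniform lower bound on the B\"ottcher radius.

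Suppose now for contradiction that $W$ is a wandering domain of $N_h$. Topological hyperbolicity gives $N_h^n(z) \to \infty$ for every $z \in W$. I would argue that $\Re(N_h^n(z))$ eventually lies in a bounded strip $[-R, R]$: it cannot tend to $+\infty$ because $N_h(z) \approx z - 1$ in the right half-plane drives real parts strictly down, and it cannot tend to $-\infty$ because then $N_h^{n+1}(z)$ would fall into arbitrarily small neighborhoods of $u$, forcing $W_{n+1}$ to coincide with the Fatou component $V$ of $u$ infinitely often---a coincidence a wandering orbit can tolerate at most once. Hence $|\Im(N_h^n(z))| \to \infty$. Assuming $W_n \cap P(N_h) = \emptyset$ for all $n > 0$, Corollary~C provides, for any fixed $z \in W$ and any $r > 0$, an $n_0$ with $\D(N_h^n(z), r) \subset W_n$ for $n \ge n_0$. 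Choosing $r > R + |\log|\beta|| + \pi$, the disk $\D(N_h^n(z), r)$ must contain some pole $p_k$ (by a pigeonhole in the imaginary direction, since $\Re(N_h^n(z))$ is bounded and $|\Im(N_h^n(z))|$ is large), contradicting $W_n \subset F(N_h)$.

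The remaining and most delicate case is that in which the Fatou component $V$ of $u$ is itself wandering, placing $W$ in the same wandering chain as $V$; then $V_n \ni N_h^n(u) \in P(N_h)$ obstructs the direct application of Corollary~C. I would treat this case by appealing to Lemma~\ref{lem:1} applied to $V$ directly: every postsingular point other than an iterate of $u$ is at distance $\ge c'>0$ from any $V_n$ (zeros of $h$ by topological hyperbolicity), and iterates $N_h^m(u)$ for $m \ne n$ lie in distinct Fatou components $V_m$ separated from $V_n$ across $J$-boundaries of distance $\ge c$. Choosing a point $z \in V$ with $f^n(z)$ close to $\bd V_n$ and a correspondingly short curve $\gamma_n$ from $f^n(z)$ to $\bd V_n$, the tubular neighborhood $\D(\gamma_n, \varepsilon\ell(\gamma_n))$ can be made too small to contain any candidate postsingular point, contradicting Lemma~\ref{lem:1}. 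The principal obstacles I foresee are producing such a $z$ whose iterates come genuinely close to $\bd V_n$, and supplying the uniform B\"ottcher-radius estimate used in establishing topological hyperbolicity.
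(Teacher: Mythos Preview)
Your verification of topological hyperbolicity is in the spirit of the paper's Proposition~\ref{prop:tophyp} (there the uniform basin radius comes from a direct bound $|N_h''|\le C$ on $\D(c_k,1)$ rather than from B\"ottcher coordinates), so that part is fine. The genuine gap is your claim that $\Re\bigl(N_h^n(z)\bigr)$ is eventually confined to a bounded strip. The asymptotic $N_h(z)=z-1+O(e^{-z})$ is \emph{not} uniform on the right half-plane: writing
\[
N_h(z)-(z-1)=-\frac{\alpha+\beta(z-1)}{e^{z}+\beta},
\]
the correction has modulus of order $|\beta z|/e^{\Re(z)}$, which is small only when $|z|=o(e^{\Re(z)})$. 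If $|\Im(w_n)|$ is comparable to or larger than $e^{\Re(w_n)}$ this term dominates and $\Re$ need not decrease. Thus your argument that ``real parts are driven strictly down'' does not exclude orbits with $\Re(w_n)\to+\infty$ along which $|\Im(w_n)|$ grows at least like $e^{\Re(w_n)}$.

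The paper runs the logic in the opposite direction. Corollary~C gives $\D(w_n,R_n)\subset W_n$ with $R_n\to\infty$; since the poles lie on the single vertical line $\Re=\ln|\beta|$, this forces $|\Re(w_n)|\to\infty$ (so the strip cannot be bounded). The case $\Re(w_n)\to-\infty$ is excluded by a contraction estimate, leaving $\limsup\Re(w_n)=+\infty$; and then one shows there is a subsequence with $\Re(w_{n_k})\to+\infty$ and $|\Im(w_{n_k})|>e^{\Re(w_{n_k})}/(3|\beta|)$ --- exactly the regime your approximation misses. The contradiction is obtained not from the pole line but from simple connectivity of Newton wandering domains: one places a short vertical segment $\eta\subset W_{n_k}$ near $w_{n_k}$ and shows $N_h(\eta)$ winds around a point of $\eta$, so $W_{n_k+1}$ would be multiply connected. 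This topological step is the heart of the proof and has no counterpart in your outline. Your attempted fix for the case ``$u$ lies in a wandering component'' is also not workable as stated: nothing forces distinct wandering components $V_m$, $V_n$ to be at uniform positive distance from each other, and Lemma~\ref{lem:1} does not let you prescribe $z$ with $f^n(z)$ close to $\partial V_n$.
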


\begin{rem}\label{rem:abc} Checking the relation $\alpha = c/a$, $\beta = b/a$ is straightforward. Observe that for $a = 0$ the map $N_h$ degenerates to a constant. If $a \neq 0$ and $b = c = 0$, then $N_h(z) = z - 1$, and if $a, c \neq 0$ and $b = 0$, then  $N_h$ is conjugated to a Fatou function of the form $F(z) = z +e^{-z} + \lambda$, for some $\lambda \in \C$. In \cite{kotus-urbanski-fatou} it is proved that such maps have no wandering domains for $\Re(\lambda) \ge 1$. Here we assume $a,b \neq 0$. Since $\alpha = c/a$, $\beta = b/a$, it is enough to consider the case $a = 1$ (and set $\alpha = c$, $\beta = b \ne 0$). Moreover, it is easy to check that $N_h$ has exactly one asymptotic value $u = -\alpha/\beta$, and it satisfies $u = \lim_{t \to -\infty} N_h(t + 2k \pi i)$ for $k \in \Z$. 
\end{rem}

The following corollary shows explicit values of parameters for which the assumptions of Theorem~D are satisfied. 

\begin{CorE} If $a,b,c \in \R$, $a,b \neq 0$ and the map $h(z) = ae^z + b z + c$ has a real zero, then its Newton's method $N_h$ has no wandering domains. The condition hold if and only if $ab > 0$ or $ab < 0$, $c/b \ge 1 - \ln(-b/a)$ $($equivalently, if $\beta > 0$ or $\beta < 0$, $\alpha \le \beta(1 - \ln(-\beta)))$. 
\end{CorE}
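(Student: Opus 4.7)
The strategy is to apply Theorem~D, which requires only that the asymptotic value $u=-c/b$ of $N_h$ satisfies $\inf_{n\ge 0}\dist(N_h^n(u),J(N_h))>0$. Since $a,b,c\in\R$, the value $u$ is real and $N_h$ preserves $\R$ wherever it is defined (because $N_h(\bar z)=\overline{N_h(z)}$), so the plan is to show that the real orbit of $u$ converges to a real attracting fixed point of $N_h$, which automatically yields the required positive distance from the Julia set.

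First I would establish the parameter characterization of ``$h$ has a real zero'' by analyzing $h'(x)=ae^x+b$. If $ab>0$, then $h'$ has constant sign, so $h$ is strictly monotone and surjective onto $\R$, giving a unique real zero. If $ab<0$, then $h'$ vanishes at the unique point $x^*=\ln(-b/a)$, a global minimum for $a>0$ and a global maximum for $a<0$, and a direct computation gives $h(x^*)=b(\ln(-b/a)-1)+c$. Existence of a real zero is then equivalent to $h(x^*)\le 0$ (resp.\ $\ge 0$), and in either sign pattern this reduces to $c/b\ge 1-\ln(-b/a)$.

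The heart of the proof is to show $N_h^n(u)\to r$ for an attracting real fixed point $r$ of $N_h$. When $ab>0$, the equation $1+\beta e^{-x}=0$ (with $\beta=b/a>0$) has no real solution, so $N_h$ has no real pole; moreover $h$ is strictly monotone and either globally convex or globally concave, so the classical tangent-line estimate for Newton's method on a monotone convex/concave function gives convergence of every real starting point to the unique real zero $r$. When $ab<0$, the hypothesis $c/b\ge 1-\ln(-b/a)$ translates into $u\le x^*-1<x^*$, placing $u$ strictly to the left of the unique real pole $x^*$ of $N_h$; using the identity $h(u)=ae^u$ (which follows from $bu+c=0$) together with the sign of $a$ one checks that $u$ lies strictly below the smaller real zero $r_1$ of $h$. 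A convexity/concavity argument combined with the mean value theorem then yields the inequality $x<N_h(x)\le r_1$ for all real $x<r_1$, so $N_h^n(u)$ is monotone increasing and bounded, hence converges to $r_1$. The borderline case $h(x^*)=0$, in which $r_1=r_2=x^*$ is a double zero of $h$, is handled identically once one checks that the apparent pole of $N_h$ at $x^*$ is removable and that $x^*$ becomes an attracting fixed point of multiplier $1/2$; a direct calculation (both the numerator and denominator of the rational-exponential expression for $N_h$ vanish at $x^*$) confirms this.

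Once $N_h^n(u)\to r$, the orbit eventually lies in an open neighborhood of $r$ contained in the Fatou set, so $\dist(N_h^n(u),J(N_h))$ is uniformly bounded below by a positive constant, and Theorem~D gives the absence of wandering domains. The main expected obstacle is the case analysis when $ab<0$, especially the borderline double-zero case, where one must argue carefully that the real pole of $N_h$ does not spoil the monotone convergence of the real orbit of $u$.
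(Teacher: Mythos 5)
Your proposal is correct and follows essentially the same strategy as the paper's proof: establish the parameter characterization by locating the extremum of $h$ on $\R$, and then show that the real orbit of the asymptotic value $u$ converges monotonically to a real (super)attracting fixed point of $N_h$, from which the required bound $\inf_n\dist(N_h^n(u),J(N_h))>0$ follows and Theorem~D applies. The case split ($ab>0$; $ab<0$ with two simple real zeros; $ab<0$ borderline double zero with removable pole and multiplier $1/2$) is the same as in the paper. The only cosmetic difference is that where you invoke the classical convexity estimates for Newton's method on a monotone convex/concave function, the paper argues directly from the observation that $N_h|_\R$ attains its extremum at the superattracting fixed point $c_0$ together with $u=\lim_{t\to-\infty}N_h(t)$, to place $u$ on the correct side of $c_0$; these two routes are interchangeable and equally elementary. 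Your observation $h(u)=ae^u$ (so $h(u)$ has the sign of $a$) is a clean way to see that $u$ lies outside the interval between the two real zeros, which the paper reaches a bit more indirectly via $u<N_h(c_0)$.
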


To prove Theorem~D we will apply Corollary~C. Hence, the first step is to show that $N_h$ is a topologically hyperbolic map. Precisely, we have the following. 

\begin{prop}\label{prop:tophyp} 
If $\alpha, \beta \in \C$, $\beta \neq 0$ and $\inf_{n\ge 0}\dist(N_h^n(u), J(N_h)) > 0$, then $N_h$ is topologically hyperbolic.
\end{prop}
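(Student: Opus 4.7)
The plan is to identify $P(N_h)\cap\C$ explicitly and then show that each of its pieces stays at a uniform positive distance from $J(N_h)\cap\C$.

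First, since $h''(z)=ae^z$ never vanishes, the critical points of $N_h$ are exactly the simple zeros of $h$, and each such zero is a superattracting fixed point (and hence its own critical value). Combined with the unique asymptotic value $u=-\alpha/\beta$ recorded in Remark~\ref{rem:abc}, this gives $S(N_h)=\{u\}\cup Z$, where $Z$ denotes the set of simple zeros of $h$. Because $h$ is transcendental entire, $Z$ is discrete with all accumulation points at $\infty$, so $\overline{S(N_h)}\cap\C=S(N_h)$ and
\[
P(N_h)\cap\C \;=\; Z\;\cup\;\{N_h^n(u):n\ge 0\}.
\]
The orbit of $u$ is handled directly by the hypothesis, which gives $\dist(N_h^n(u),J(N_h))\ge c_1>0$ (and, in passing, ensures the orbit never hits a pole, so is well defined in $\C$).

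The main work is a uniform lower bound $\dist(z_k,J(N_h))\ge c_2>0$ over $z_k\in Z$, and this is the step I expect to be the main obstacle. Using $h(z_k)=0$ (equivalently $ae^{z_k}=-bz_k-c$) one computes
\[
N_h(z)=z_k+A_k(z-z_k)^2+O((z-z_k)^3),\qquad A_k=\frac{h''(z_k)}{2h'(z_k)}=\frac{bz_k+c}{2(bz_k+c-b)}.
\]
Thus $A_k\to 1/2$ as $|z_k|\to\infty$, and since $Z$ has only finitely many points in any bounded region, there is $M>0$ with $|A_k|\le M$ for every $z_k\in Z$. Let $B_k$ denote the immediate basin of $z_k$ under $N_h$. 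Distinct fixed points lie in disjoint basins, so $B_k$ contains no other zero of $h$, and hence no critical point of $N_h$ besides $z_k$. Combined with the simple connectedness of $B_k$ (a standard property of immediate basins of roots for Newton's methods of entire functions; see \cite{bfjk}), the Böttcher coordinate extends to a biholomorphism $\phi_k\colon B_k\to\D$ with $\phi_k'(z_k)=A_k$. Applying Koebe's One-quarter Theorem to the univalent map $\phi_k^{-1}\colon\D\to B_k$ yields
\[
B_k\;\supset\;\D\!\left(z_k,\tfrac{1}{4|A_k|}\right)\;\supset\;\D\!\left(z_k,\tfrac{1}{4M}\right),
\]
so $\dist(z_k,J(N_h))\ge 1/(4M)$ for every $z_k\in Z$. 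Combining the two bounds gives $\dist(P(N_h),J(N_h)\cap\C)\ge\min(c_1,1/(4M))>0$, i.e., $N_h$ is topologically hyperbolic. The delicate ingredients here are the explicit uniform bound on the quadratic coefficients $A_k$ (which uses the very specific form of $h$) and the passage from local Böttcher data to a Euclidean disk in $B_k$ via Koebe; once both are in place, the proof is essentially mechanical.
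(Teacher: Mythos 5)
Your reduction to bounding $\dist(z_k,J(N_h))$ uniformly over the simple zeros $z_k$ of $h$, plus handling the orbit of $u$ by hypothesis, is the right structure, and your computation of the quadratic coefficient $A_k=\tfrac{h''(z_k)}{2h'(z_k)}=\tfrac{bz_k+c}{2(bz_k+c-b)}\to\tfrac12$ is correct (and is essentially the same estimate the paper makes). However, the step in which you pass from this to a Euclidean disk in $B_k$ has a genuine gap: the claim that the B\"ottcher coordinate extends to a \emph{biholomorphism} $\phi_k\colon B_k\to\D$ does not follow merely from $B_k$ being simply connected and containing no critical point other than $z_k$. For a transcendental meromorphic map, the iterative extension $\phi_k(z)=\sqrt{\phi_k(N_h(z))}$ requires $N_h\colon B_k\to B_k$ to be a \emph{proper} map of degree $2$; what can obstruct this is the asymptotic value $u$ and its logarithmic tract $\{\Re z<-R\}$ (where $N_h\to u$). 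Nothing in your argument rules out $u\in B_k$, nor that the tract meets $B_k$; in either case $N_h|_{B_k}$ can have degree $>2$ or fail to be proper, the pullback of the B\"ottcher coordinate can encounter an odd-order zero of $\phi_k\circ N_h$, and the extension stalls. (For a map with only critical values one can invoke the classical B\"ottcher theorem in Milnor's form; for a map with a finite asymptotic value, the needed hypothesis is ``$B_k$ contains no singular value of $N_h$ other than $z_k$,'' and you only verify the critical-point half of that.)

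It is worth noting that the paper's own proof avoids this entirely: it bounds $|N_h''(z)|\le C$ uniformly for $z\in\D(c_k,1)$ when $|k|$ is large (using $h(c_k)=0$ to control $|e^{c_k}|$), hence $|N_h'(z)|<1$ on $\D(c_k,1/C)$, so this disk is forward invariant and lies in the Fatou set. That gives the uniform lower bound $\dist(c_k,J(N_h))\ge 1/C$ directly, with no appeal to B\"ottcher coordinates, properness, or Koebe. Your $|A_k|\le M$ is in substance the same second-derivative estimate at the single point $z_k$; to make your route rigorous you would either have to prove properness and degree $2$ of $N_h|_{B_k}$ (ruling the asymptotic tract out of $B_k$), or — more in the spirit of the paper — upgrade the pointwise bound on $N_h''(z_k)$ to a bound on $N_h''$ over a fixed-radius disk and conclude by forward invariance.
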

\begin{proof}

As $N_h$ is Newton's method of $h$, the fixed points of $N_h$ coincide with the  zeroes of $h$. We have 
\[
h(z) = e^z + \beta z + \alpha, \qquad h'(z) = e^z + \beta, \qquad h''(z) = e^z, \qquad N_h'(z) = \frac{h(z)h''(z)}{(h'(z))^2}.
\]
In particular, $h'' \neq 0$, so all critical points of $N_h$, denoted by $c_k$, $k \in \Z$, are superattracting fixed points of $N_h$ and satisfy
\begin{equation}\label{eq:c_k}
h(c_k) = e^{c_k} +\beta c_k + \alpha = 0. 
\end{equation}
If $e^{1-\alpha/\beta} + \beta = 0$, then the point 
\begin{equation}\label{eq:ctilde}
\tilde c = 1- \alpha/\beta
\end{equation}
is the unique double zero of $h$ and the unique (attracting and non-superattracting) fixed point of $N_h$ outside $\{c_k\}_{k\in \Z}$. Otherwise, all zeroes of $h$ are simple and the fixed points of $N_h$ coincide with the points $c_k$, $k\in \Z$. 

Therefore, the set of the singular values of $N_h$ consists of the superattracting fixed points $c_k$, $k \in \Z$, and the asymptotic value $u$. By hypothesis, it is enough to show $\inf_{k\in\Z}\dist(c_k, J(f)) > 0$. To this end, we will prove that there exists $r >0$ such that
\begin{equation}\label{eq:N'<}
|N_h'(z)| < 1 \qquad \text{for every } z \in \D(c_k, r), \quad k \in \Z.
\end{equation}
Indeed, in this case $\D(c_k, r)$ is forward invariant, so it is contained in the basin of $c_k$, which ends the proof.

To show \eqref{eq:N'<}, note that by \eqref{eq:c_k}, if $z \in \D(c_k, 1)$, then 
\[
|e^z + \beta z + \alpha| = |e^z - e^{c_k} + \beta (z - c_k)| \le |e^z|(1 + e^{\Re(c_k - z)}) + |\beta||z- c_k| \le (e + 1)|e^z| + |\beta|
\]
and
\[
|e^z| = |e^{z-c_k}| |e^{c_k}| = e^{\Re(z-c_k)}||\beta c_k + \alpha| \ge e^{-1}|\beta c_k + \alpha| \xrightarrow[|k| \to \infty]{} \infty.
\]
This implies that if $z \in \D(c_k, 1)$ for large $|k|$, then
\begin{align*}
|N_h''(z)| &= \left|\frac{h''(z)}{h'(z)} + \frac{h(z)h'''(z)}{(h'(z))^2} - \frac{2h(z)(h''(z))^2}{(h'(z))^3}\right| \\
&\le \left|\frac{e^z}{e^z +\beta}\right| + \left|\frac{e^z(e^z+\beta z + \alpha)}{(e^z + \beta)^2}\right| + \left|\frac{2e^{2z}(e^z + \beta z + \alpha)}{(e^z + \beta)^3}\right| \le C 
\end{align*}
for some $C >1$. Since $N_h'(c_k) = 0$, for $z \in \D(c_k, 1/C)$ we have
\[
|N_h'(z)| \le \sup_{\zeta \in \D(c_k, 1/C)} |N_h''(\zeta)| |z - c_k| < 1,
\]
which gives \eqref{eq:N'<} with $r = 1/C$ and ends the proof.
\end{proof}

\begin{proof}[Proof of Theorem~\rm D] By Proposition~\ref{prop:tophyp}, we know that under our assumptions $N_h$ is topologically hyperbolic. We want to show that $N_h$ has no wandering domains. 

We assume, to get a contradiction, that $W_0$ is a wandering domain. Let $w_0\in W_0$ and let $w_n:=N_h^n(w_0)\subset W_n$ where $W_n$ denotes the Fatou component such that $N_h^n(W) \subset W_n$, $n\geq 0$. Recall that $W_n,\ n\geq 0$ must be simply connected (see \cite{berter}, \cite{bfjknew}). The idea of the proof is to show that for $n$ large enough there exists a vertical segment $\eta$ contained in $W_n$ such that its image by $N_h$ contains a closed curve surrounding $\eta$, which implies that $W_{n+1}$ is not simply connected, a contradiction. 

Now we proceed to the technical details. By Corollary~C, 
\begin{equation}\label{eq:D}
\D(w_n, R_n) \subset W_n
\end{equation}
for some sequence $R_n \to \infty$. Note that the poles of $N_h$ are the solutions of the equation $e^z +\beta = 0$ (with the exception of the point $\tilde c$ defined in \eqref{eq:ctilde} in the case $e^{1-\alpha/\beta} + \beta = 0$), so they are located at the points $\ln|\beta| +i\arg(-\beta) +  2k\pi i$, $k \in \Z$. In particular, since the poles of $N_h$ are outside $W_n$, \eqref{eq:D} implies 
\begin{equation}\label{eq:>}
|\Re(w_n)| \to \infty.
\end{equation}
Note that if $\Re(w_n) < -c$ for a large $c > 0$, then
\[
\frac{|w_{n+1}|}{|w_n|} \le \frac{|w_n|+1}{|w_n|(|\beta|e^{-\Re(w_n)}-1)} + \frac{|\alpha|}{|w_n|(|\beta| - e^{\Re(w_n)})}\le \frac{1+1/c}{|\beta|e^c-1} + \frac{|\alpha|}{c(|\beta|- e^{-c})} < \frac{1}{2},
\]
which together with \eqref{eq:>} implies 
\begin{equation}\label{eq:Re>}
\limsup_{n\to\infty} \Re(w_n) = \infty.
\end{equation}
Moreover, if $\Re(w_n) > c$ for a large $c > 0$ and $|\Im(w_n)| \le e^{\Re(w_n)}/(3|\beta|)$, then, 
since
\[
w_{n+1} = w_n - 1 - \frac{\alpha + \beta(w_n-1)}{e^{w_n}+\beta},
\]
we have
\begin{align*}
|\Re(w_{n+1}) - \Re(w_n) +1| &= \left|\re\left(\frac{\alpha+\beta (w_n + 1)}{e^{w_n} + \beta}\right)\right|\\
&\le \frac{|\alpha| + |\beta|(\Re(w_n) + 1 + |\Im(w_n)|)}{e^{\Re(w_n)} - |\beta|}\\ &\le \frac{|\alpha| + |\beta|(c + 1) + e^c/3}{e^c - |\beta|} < \frac{1}{2},
\end{align*}
so $-3/2 < \Re(w_{n+1}) - \Re(w_n) < -1/2$. This together with \eqref{eq:>} and \eqref{eq:Re>} implies that for a sequence $n_k \to \infty$ there holds
\begin{equation}\label{eq:Im>}
\Re(w_{n_k}) \to \infty, \quad |\Im(w_{n_k})| > \frac{e^{\Re(w_{n_k})}}{3|\beta|}.
\end{equation}
Fix a large $k > 0$ and let 
\[
M = R_{n_k}, \qquad p = \ln (3|\beta| M) , \qquad R = \Re(w_{n_k}) - p. 
\]
Take also $q \in [-2\pi, 2\pi]$ such that $\Im(w_{n_k}) + q = 2 l\pi$ for some $l \in \Z$ and $2|l| \pi \ge |\Im(w_{n_k})|$. Define
\[
z_0 = w_{n_k} - p + iq = R + 2l \pi i.
\]
Then by \eqref{eq:Im>}, 
\begin{equation}\label{eq:z>}
\frac{|z_0|}{e^R} \geq \frac{|\Im(z_0)|}{e^R} = \frac{2|l| \pi i}{e^R} 
\ge \frac{|\Im(w_{n_k})|}{e^R} > \frac{e^{\Re(w_{n_k})}}{3|\beta|e^R} = \frac{e^p}{3|\beta|} = M.
\end{equation}
Note that by \eqref{eq:D} and \eqref{eq:Im>}, we have $p > 0$ and we can assume that the constants $M, R > 0$ are arbitrarily large.
Let 
\[
\eta(t) = z_0 + it, \qquad  t \in [-3\pi, 3\pi]
\]
and note that $\eta \subset \D(w_{n_k}, p + |q| + 3\pi)$, where
\[
p + |q| + 3\pi \le \ln(3|\beta|R_{n_k}) + 5 \pi < R_{n_k}
\]
(if $k$ is chosen large enough), so by \eqref{eq:D},
\begin{equation}\label{eq:eta}
\eta \subset W_{n_k}.
\end{equation}
Let
\[
\gamma(t) = N_h(\eta(t)) - z_0 + 1  - \zeta,
\]
where
\[
\zeta = \frac{z_0}{(e^{2R}/|\beta|^2) - 1}.
\]
By the definition of $N_h$, we can write $\gamma$ in the form
\[
\gamma(t) = \gamma_1(t) + \gamma_2(t) + it
\]
for
\[
\gamma_1(t) = \frac{-z_0}{1 + e^{R + it}/\beta} - \zeta, \qquad  \gamma_2 (t) = \frac{\beta -\alpha - i\beta t}{\beta + e^{R + it}}.
\]
Note that when $t$ varies along an interval of length $2\pi$, the curve $\gamma_1$ describes the image of the circle $\bd\D(0, e^R)$ under the M\"obius map $z \mapsto -z_0/(1 +z/\beta) - \zeta$. Easy computations show that this image is the circle $\bd\D(0, r)$ for
\[
r = \frac{|z_0|}{e^R/|\beta| - |\beta|e^{-R}}.                                                                                                                                                                                                                                                          \]
Hence, 
\begin{equation}\label{eq:g1}
|\gamma_1(t)| = r \ge \frac{|\beta||z_0|}{e^R}
\end{equation}
and
\begin{equation}\label{eq:g2}
|\gamma_2(t)| \le \frac{|\alpha| + (3\pi + 1)|\beta|}{e^R - |\beta|} < \varepsilon_1
\end{equation}
for a small $\varepsilon_1>0$, if we can choose $R$ large enough. By \eqref{eq:z>}, \eqref{eq:g1} and \eqref{eq:g2},
\begin{equation}\label{eq:g>}
|\gamma(t)| \ge |\gamma_1(t)| - |\gamma_2(t)| - 3\pi \ge \frac{|\beta| |z_0|}{e^R} - \varepsilon_1 - 3\pi >  \frac{|\beta| |z_0|}{2e^R},
\end{equation}
if $M$ is chosen large enough.
Let 
\[
\tilde \gamma(t) = \frac{-\beta z_0}{e^{R+it}} 
\]
and note that, by \eqref{eq:g2} and \eqref{eq:z>},
\begin{align*}
\left|\frac{\gamma(t)}{\tilde\gamma(t)} - 1 \right| &\le \left|\frac{\gamma_1(t) + \zeta}{\tilde\gamma(t)} - 1 \right| + \left|\frac{\zeta}{\tilde\gamma(t)}\right|+ \left|\frac{\gamma_2(t)}{\tilde\gamma(t)}\right| + \frac{3\pi}{|\tilde\gamma(t)|}\\
&\le \frac{1}{|1 + e^{R+it}/\beta|} + \frac{|\beta| e^R}{e^{2R} - |\beta|^2} + \frac{(|\alpha| + (3\pi + 1)|\beta|)e^R}{|\beta| |z_0| (e^R - |\beta|)} + \frac{3\pi e^R}{|\beta||z_0|}\\
&\le \frac{1}{e^R/|\beta| - 1} + \frac{|\beta|}{e^R - |\beta|^2e^{-R}} + \frac{|\alpha/\beta| + 3\pi +1}{M(e^R - |\beta|)} + \frac{3\pi}{|\beta|M} < \varepsilon_2
\end{align*}
for a small $\varepsilon_2 >0$, if $M, R$ are chosen sufficiently large. Hence,
there exists a branch $\Arg$ of the argument defined along the curve $\gamma(t)$, such that
\begin{equation}\label{eq:Arg}
\Arg(\gamma(t))= -t + A + \delta(t),
\end{equation}
where 
\[
A = \Arg(-\beta z_0) \in [0, 2\pi]
\]
and
\[
|\delta(t)| < \varepsilon_2.
\]
By \eqref{eq:g1} and \eqref{eq:g2}, 
\begin{equation}\label{eq:mod}
\gamma_1 + \gamma_2 \subset \D(0, r + \varepsilon_1) \setminus \D(0, r - \varepsilon_1)
\end{equation}
for a small $\varepsilon_1 > 0$ (provided $R$ is chosen large enough) and by \eqref{eq:Arg}, there exist $t_1^+, t_2^+, t_1^-, t_2^- \in [-3\pi, 3\pi]$ such that 
\begin{align*}
t_1^+ &= \min\{t \in [-\pi/2 + A - \varepsilon_2, -\pi/2 + A + \varepsilon_2]: \Arg(\gamma(t)) = \pi/2\},\\
t_2^+ &= \min\{t\in [-5\pi/2 + A - \varepsilon_2, -5\pi/2 + A + \varepsilon_2]: \Arg(\gamma(t)) = 5\pi/2\},\\
t_1^- &= \max\{t \in [\pi/2 + A - \varepsilon_2, \pi/2 + A + \varepsilon_2]: \Arg(\gamma(t)) = -\pi/2\},\\
t_2^- &= \max\{t \in [-3\pi/2 + A - \varepsilon_2, -3\pi/2 + A + \varepsilon_2]: \Arg(\gamma(t)) = 3\pi/2\}.
\end{align*}

\begin{figure}[t]
    \centering
     \includegraphics[width=0.45\textwidth]{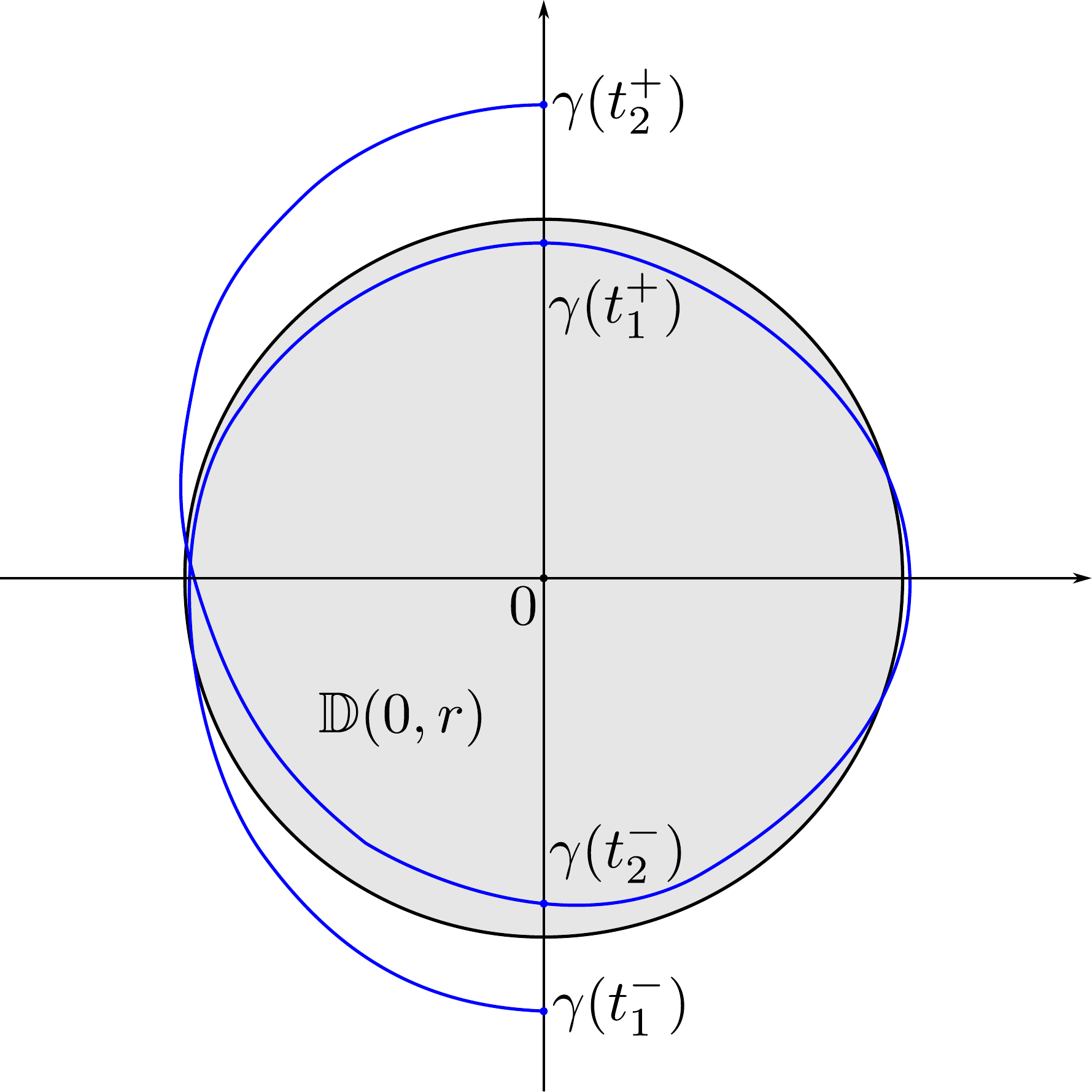}
     \label{fig:gamma}\caption{The curve $\gamma$.}
    \end{figure}
By definition, 
\begin{align*}
\gamma(t_1^+), \gamma(t_2^+) &\in \{z: \Re(z) = 0, \, \Im(z) > 0\},\\
\gamma(t_1^-), \gamma(t_2^-) &\in \{z: \Re(z) = 0, \, \Im(z) < 0\},\\
\gamma|_{(t_j^-, t_j^+)} &\subset \{z: \Re(z) < 0\} \quad \text{for } j = 1, 2.
\end{align*}
Since $\gamma(t) = \gamma_1(t) + \gamma_2(t) + it$, by \eqref{eq:Arg} and \eqref{eq:mod} we have 
\[
|\gamma(t_1^+)| - |\gamma(t_2^+)| = \Im(\gamma(t_1^+)) - \Im(\gamma(t_2^+)) < t_1^+ - t_2^+ + 2 \varepsilon_1 < -2\pi + 2 \varepsilon_2 + 2 \varepsilon_1 < 0
\]
and
\[
|\gamma(t_1^-)| - |\gamma(t_2^-)| = -\Im(\gamma(t_2^-)) + \Im(\gamma(t_2^-)) > t_2^- - t_1^- - 2 \varepsilon_1 > 2\pi - 2 \varepsilon_2 - 2 \varepsilon_1 > 0
\]
(see Figure~\ref{fig:gamma}).
Hence, by standard topological arguments, the curves $\gamma|_{(t_1^-, t_1^+)}$ and $\gamma|_{(t_2^-, t_2^+)}$ intersect. Together with \eqref{eq:g>}, this shows that $\D\left(0, |\beta| |z_0|/(2e^R)\right)$ lies in a bounded component of $\C \setminus \gamma$, so $D:=\D\left(z_0-1+\zeta, |\beta| |z_0|/(2e^R)\right)$ lies in a bounded component of $\C \setminus N_h(\eta)$. By \eqref{eq:z>}, we have
\[
|\zeta - 1| \le |\zeta| + 1 =  \frac{|z_0|}{(e^{2R}/|\beta|^2) - 1} + 1 
\le \frac{|\beta||z_0|}{4e^R} + \frac{|\beta|M}{4} \le \frac{|\beta||z_0|}{2e^R}
\]
(provided $M, R$ are chosen sufficiently large), which implies $z_0 \in D$, so $z_0$ in a bounded component of $\C \setminus N_h(\eta)$.
By \eqref{eq:eta}, we have $z_0 \in W_{n_k} \subset \C \setminus W_{n_k+1}$ and $f(\eta) \subset W_{n_k+1}$. Hence, $W_{n_k+1}$ is not simply connected, which makes a contradiction.  
\end{proof}

This ends the proof of Theorem~D. We finish this section proving Corollary~E.

\begin{proof}[Proof of Corollary~\rm E]
We will check that under the hypothesis, the unique asymptotic value $u$ of $N_h$ is in the basin of attraction of a real fixed point of $N_h$ and so 
$$
\inf_{n\ge 0}\dist\left(N_h^n(u), J(N_h)\right) > 0.
$$
Consider $N_h$ restricted to the real axis. Note first that since
\[
N_h(t) = t - 1 - \frac{\alpha + \beta (t-1)}{e^t + \beta},
\]
we have $N_h(t) < t$ for sufficiently large $t  \in \R$.

Assume first $\beta > 0$. Then $h$ has one (simple) real zero and $h'$ has no real zeroes, so $N_h|_{\R}$ has no poles and exactly one fixed point, say $c_0$, which is superattracting, since $e^{1-\alpha/\beta} + \beta > 0$ (see the proof of Proposition~\ref{prop:tophyp}) and is also the unique critical point of $N_h|_{\R}$, such that $N_h''(c_0) \neq 0$. Then $N_h(t) < t$ for $t > c_0$ and $N_h|_{\R}$ attains the minimum at $c_0$. As $u = \lim_{t \to -\infty} N_h(t)$, we have $u > c_0$, so $N_h^n(u)$ converges monotonically to $c_0$ as $n \to \infty$.

Assume from now on $\beta < 0$. Consider first the case $\alpha = \beta(1 - \ln(-\beta))$. Then $e^{1-\alpha / \beta} + \beta = 0$, so $h$ has one real zero equal to $\tilde c$ defined in \eqref{eq:ctilde}, which is also a zero of $h'$. Hence, $N_h|_{\R}$ has no poles, no critical points and exactly one fixed point $\tilde c$, which is attracting. Consequently, $N_h|_{\R}$ is increasing and $N_h^n(u)$ converges monotonically to $\tilde c$ as $n \to \infty$.

Consider now the case $\alpha < \beta(1 - \ln(-\beta))$. Then $h$ has two (simple) real zeroes, so $N_h|_{\R}$ has one (simple) pole, say $p$ and exactly two superattracting fixed points, say $c_0 < c_1$, which are the unique critical points of $N_h|_{\R}$, such that $N_h''(c_0), N_h''(c_1) \neq 0$. Since $N_h(t) < t$ for large $t$ and $u = \lim_{t \to -\infty} N_h(t)$, we have $c_0 < p < c_1$, $N_h(c_0) = \max_{t < p} N_h(t)$ and $N_h(c_1) = \min_{t > p} N_h(t)$. Hence, $u < N_h(c_0)$ and $N_h^n(u)$ converges monotonically to $c_0$ as $n \to \infty$.

Finally, it is straightforward to check that in the case $\alpha < \beta(1 - \ln(-\beta))$ the map $h$ has no real zeroes.

\end{proof}

\bibliography{wandering}
\end{document}